\newcommand{\mc}[1]{\mathcal{#1}}
\newcommand{\bb}[1]{\mathbb{#1}}
\newcommand{\E}{\mathbb{E}}
\newcommand{\N}{\mathbb{N}}
\newcommand{\F}{\mathcal{F}}
\begin{document}

\title{A proof of Sanov’s Theorem via discretizations}

\date{\today}
\author{Rangel Baldasso 
  \thanks{ mail: r.baldasso@math.leidenuniv.nl, Mathematical Institute, Leiden University, P.O. Box 9512, 2300 RA Leiden, The Netherlands.}
  \and
  Roberto I. Oliveira 
  \thanks{rimfo@impa.br, IMPA, Estrada Dona Castorina 110, 22460-320 Rio de Janeiro, RJ - Brazil.}
  \and
  Alan Pereira 
  \thanks{alan.pereira@im.ufal.br, Instituto de Matem\'{a}tica, Universidade Federal de Alagoas, Rua Lorival de Melo Mota s/n, 57072970 Macei\'{o}, AL - Brazil.}
  \and
  Guilherme Reis 
  \thanks{\Letter guilherme.reis@tum.de, Fakult\"at f\"ur Mathematik, Technische Universit\"at M\"unchen, Boltzmannstra{\ss}e 3, 85748 Garching bei M\"unchen, Germany.}}
\maketitle

\begin{abstract}
We present an alternative proof of Sanov’s theorem for Polish spaces in the weak topology that follows via discretization arguments. We combine the simpler version of Sanov's Theorem for discrete finite spaces and well chosen finite discretizations of the Polish space. The main tool in our proof is an explicit control on the rate of convergence for the approximated measures.

\noindent
\emph{Keywords and phrases:} large deviations; Sanov's Theorem

\noindent
MSC 2010: 60F10
\end{abstract}

\section{Introduction}
~
\par Sanov's Theorem is a well know result in the theory of large deviations principles. It provides the large deviations profile of the empirical measure of a sequence of i.i.d. random variables and characterizes its rate function as the relative entropy. This short note provides an alternative proof of this fact, by exploring the metric structure of the weak topology with the variational formulation of the relative entropy.

Formally, let $(M,\dist)$ be a Polish space and let $\big(X_n \big)_{n \in \N}$ be a sequence of independent $M$-valued random elements identically distributed according to $\mu \in \mc{P}(M)$,  where $\mc{P}(M)$ is the set of Borel probability measures on $M$. We denote by $\delta_x$ the probability measure degenerate at $x \in M$, and define the \textit{empirical measure} of $X_1, \dots , X_n$  by
\begin{equation} \label{eq:DefLn}
L_n := \frac{1}{n}\sum_{i=1}^n\delta_{X_i}.
\end{equation}
Also, given $\upsilon, \mu \in \mc{P}(M)$, the \textit{relative entropy} between $\upsilon$ and $\mu$ is defined as
\begin{equation} \label{eq:variational}
H(\upsilon|\mu):=\sup \left\{\int f \d \upsilon -\log \int e^f \d \mu;\,f \text{ is measurable and bounded} \right\}.
\end{equation}

Sanov's Theorem is given by the following statement.
\begin{theorem}[Sanov] \label{theoem:sanov}
Let $\big(X_n \big)_{n \in \N}$ be a sequence of i.i.d.\ random variables taking values in a Polish space $(M,\dist)$ with distribution $\mu \in \mc{P}(M)$. The sequence of empirical measures $\big(L_n\big)_{n \in \N}$ of $\big(X_{n}\big)_{n \in \N}$ (defined in Equation~\eqref{eq:DefLn}) satisfies a large deviations principle on the space $\mathcal{P}(M)$ with rate function $H( \,\cdot\, |\mu)$.
\end{theorem}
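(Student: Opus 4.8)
The plan is to deduce the full large deviations principle (LDP) from its finite-alphabet counterpart, which we take as given, by transporting estimates along finite measurable partitions of $M$. Since $(M,\dist)$ is Polish, the weak topology on $\mc{P}(M)$ is metrizable; fix such a metric $d$ (say a bounded-Lipschitz or L\'evy--Prokhorov metric). It then suffices to establish the local lower bound $\liminf_n \tfrac1n \log \mathbb{P}(L_n \in B(\nu,\delta)) \ge -H(\nu|\mu)$ for every $\nu$ and every ball, the matching local upper bound, and exponential tightness; these together yield the LDP on the metric space $\mc{P}(M)$. For a finite measurable partition $\mc{A} = \{A_1,\dots,A_k\}$ of $M$, write $p_{\mc{A}}\colon\mc{P}(M) \to \Delta_k$ for the map $\nu \mapsto (\nu(A_1),\dots,\nu(A_k))$ into the simplex, and set $\nu_{\mc{A}} := p_{\mc{A}}(\nu)$. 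The image $p_{\mc{A}}(L_n)$ is exactly the empirical measure of the i.i.d.\ discretized variables $Y_i := p_{\mc{A}}(\delta_{X_i})$, so the finite Sanov theorem applies to it with rate $H(\,\cdot\,|\mu_{\mc{A}})$.

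Two facts about relative entropy tie the finite picture to the full one. First, the data-processing inequality $H(\nu_{\mc{A}}|\mu_{\mc{A}}) \le H(\nu|\mu)$ holds for every $\mc{A}$; it follows directly from \eqref{eq:variational} by restricting the supremum to functions constant on the cells of $\mc{A}$. Second, and conversely, one has the identity
\[
H(\nu|\mu) \;=\; \sup_{\mc{A}} H(\nu_{\mc{A}}|\mu_{\mc{A}}),
\]
the supremum running over all finite measurable partitions, which again follows from \eqref{eq:variational} together with an approximation of bounded measurable $f$ by simple functions (a monotone-class argument). The \emph{main tool}, as announced, is the quantitative discretization estimate: using separability together with a tight set carrying most of the mass of $\mu$, for each $\varepsilon>0$ one builds a finite partition whose relevant cells have diameter below $\varepsilon$ and shows that two measures with nearly equal cell-masses are $O(\varepsilon)$-close in $d$. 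Concretely this bounds $d(\nu,\nu')$ by a modulus of the differences $|\nu(A_j)-\nu'(A_j)|$ plus a tail term, so that preimages $p_{\mc{A}}^{-1}(U)$ of small simplex neighborhoods are trapped inside weak balls, and conversely.

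For the lower bound, fix $\nu$ with $H(\nu|\mu)<\infty$ and a ball $B(\nu,\delta)$. Choosing $\mc{A}$ fine enough, the quantitative estimate provides a neighborhood $U$ of $\nu_{\mc{A}}$ in $\Delta_k$ with $p_{\mc{A}}^{-1}(U) \subseteq B(\nu,\delta)$. The finite Sanov lower bound then gives
\[
\liminf_n \tfrac1n \log \mathbb{P}\big(L_n \in B(\nu,\delta)\big) \;\ge\; \liminf_n \tfrac1n \log \mathbb{P}\big(p_{\mc{A}}(L_n) \in U\big) \;\ge\; -H(\nu_{\mc{A}}|\mu_{\mc{A}}) \;\ge\; -H(\nu|\mu),
\]
the last step being data processing; notably, a single partition already suffices here.

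The upper bound is the delicate direction. One first proves exponential tightness of $(L_n)$, reducing the closed-set bound to compact sets; this rests on tightness of $\mu$ and an exponential Markov estimate on the mass placed outside large compact sets. On a compact $C$ one covers by finitely many small balls, and by the quantitative estimate each ball $B(\nu,\delta)$ sits inside $p_{\mc{A}}^{-1}(\overline{U}_\nu)$ for a suitable partition and neighborhood $\overline{U}_\nu$ of $\nu_{\mc{A}}$, so the finite Sanov upper bound controls $\tfrac1n \log \mathbb{P}(p_{\mc{A}}(L_n) \in \overline{U}_\nu)$ by $-\inf_{\overline{U}_\nu} H(\,\cdot\,|\mu_{\mc{A}})$. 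Letting $\delta \to 0$ and then refining the partition, this improves to $-H(\nu_{\mc{A}}|\mu_{\mc{A}})$ and, via the supremum identity above, to $-H(\nu|\mu)$. I expect the crux of the whole argument to lie precisely here: coordinating the three limiting parameters — the radius $\delta$, the partition fineness, and $n$ — so that the discretization error never overwhelms the exponential rate, while uniformly lifting the coarse entropies $H(\nu_{\mc{A}}|\mu_{\mc{A}})$ up to $H(\nu|\mu)$ over the compact set. Finally, lower semicontinuity and compactness of the level sets of $H(\,\cdot\,|\mu)$ follow from \eqref{eq:variational}, as a supremum of weakly continuous affine maps, together with the exponential tightness established above.
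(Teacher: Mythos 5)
Your overall architecture --- split $M$ into finitely many small-diameter cells plus a remainder of small $\mu$-mass, invoke finite-alphabet Sanov for the cell counts, and recover $H(\,\cdot\,|\mu)$ as a supremum of discretized entropies over partitions --- is the same as the paper's, and your lower bound is essentially its Proposition~\ref{prop:HgeqI}. (One detail you gloss over there: to trap $p_{\mc A}^{-1}(U)$ inside $B(\nu,\delta)$ you must control the tail mass $\nu(K^{\complement})$ of the \emph{target} measure, not of $\mu$; this requires the entropy inequality~\eqref{eq:entropy_inequality} as in Lemma~\ref{lemma:convergenceNukDiv}, and works only when $H(\nu|\mu)<\infty$, which is the only case that matters for the lower bound. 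Your supremum identity over all finite partitions via uniform approximation by simple functions is fine.)

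The genuine gap is in your upper bound. You claim that each weak ball $B(\nu,\delta)$ sits inside $p_{\mc A}^{-1}(\overline U_\nu)$ for a small neighborhood $\overline U_\nu$ of $\nu_{\mc A}$ in the simplex. That inclusion is false in general: the functionals $\sigma\mapsto\sigma(A_j)$ are not continuous for the weak topology (take $A=[0,1)$ and $\delta_{-1/n}\to\delta_0$), so measures weakly close to $\nu$ may have cell masses far from $\nu_{\mc A}$. Your quantitative discretization estimate gives only the opposite implication (cell masses close $\Rightarrow$ weakly close up to a tail term), which is what the lower bound needs; the ``and conversely'' you append is precisely what is not available. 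One could try to repair this by insisting that every cell be a $\nu$-continuity set, but then the inclusion is only asymptotic (no modulus), the partition must be re-chosen for each center of your compact cover, and you would still need such partitions to realize $\sup_{\mc A}H(\nu_{\mc A}|\mu_{\mc A})=H(\nu|\mu)$ --- none of which you address; indeed you explicitly leave ``the crux'' of coordinating $\delta$, the partition fineness and $n$ unresolved. The paper avoids the problem entirely: it never maps $L_n$ to the simplex, but replaces $L_n$ by its discretization $L_n^m$ at super-exponentially small cost (Lemma~\ref{prop:boundLkLnk}), applies the finite-space LDP upper bound to the \emph{weak} closed ball $\bar B_{\varepsilon+1/\sqrt{m}}(\upsilon)$ intersected with $\mc P(M_m)$, and is then left with the identity $\sup_{m}\inf_{\sigma\in\bar B_{1/\sqrt{m}}(\upsilon)}H(\sigma|\mu^m)=H(\upsilon|\mu)$ of Lemma~\ref{lemma:supinf}. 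Proving that identity is the real work: a near-optimal discrete $\sigma$ near $\upsilon$ with $H(\sigma|\mu^m)\le\alpha$ is lifted to $\rho\in\mc P(M)$ with $\rho^m=\sigma$ and $H(\rho|\mu)=H(\sigma|\mu^m)$ by spreading the mass of $\sigma$ over each cell proportionally to $\mu$, and one concludes by lower semicontinuity of $H(\,\cdot\,|\mu)$. Your proposal contains no substitute for either the exponential-equivalence step or this lifting construction.
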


When the space $M$ is finite, the theorem above is proved in an elementary and elegant way (see Den Hollander~\cite[Theorem II.2]{denhollander}, and Dembo and Zeitouni~\cite[Theorem 2.1.10]{dembo_zeitouni}). In this work, we prove the theorem for general Polish metric spaces by extending this elementary proof via sequences of discretizations of the space.
We split the set $M$ in a finite number of subsets which belong to one of two distinct categories. The well-behaved sets are the ones with small diameter, while the badly behaved sets will have small $\mu$-measure. We remark though that when the space $M$ is compact, no badly behaved sets are necessary. These partitions define natural projections on the space and allow us to approximate the sequence $\big(X_{n} \big)_{n \in \N}$ by variables in the discretized spaces, and by consequence provide approximations for its empirical measures. The main technical observation is that the discretized relative entropy converges to the relative entropy~\eqref{eq:variational} as we take thinner partitions (see Lemma~\ref{le:supgoodpart}) and the relative entropy is well approximated in balls (see Lemma~\ref{lemma:supinf}).

Some ideas used to prove Lemma~\ref{lemma:supinf} are roughly inspired by the proof of the upper bound in Csisz\'ar~\cite{csiszar}. His work presents a proof of Sanov's Theorem for the $\uptau$-topology, a stronger topology than that of weak convergence, with an approach that differs greatly from more classical ones that can be found, for example, in~\cite[Theorem 6.2.10]{dembo_zeitouni}.

There are two proofs of Sanov's Theorem in~\cite{dembo_zeitouni}, one by means of Cram\'er's Theorem for Polish spaces and the other following a projective limit approach. Although we strongly use the metric structure of the space, our proof does not require profound knowledge of large deviations theory or general topology.

\bigskip

\noindent\textbf{Organization of the paper.} In the next section we collect some preliminary notation and results that are used during the text. Section~\ref{sec:discretization} introduces the discretization considered here. Section~\ref{sec:theorem} contains the statement of the main lemmas used in the proof. We also show how Sanov's Theorem is proved in Section~\ref{sec:theorem}. Sections~\ref{sec:proof:le:supgoodpart} and~\ref{section:proof_supinf} contain the proofs of Lemmas~\ref{le:supgoodpart} and \ref{lemma:supinf}, respectively.

\bigskip

\noindent\textbf{Acknowledgments.}
RB is supported by the Mathematical Institute of Leiden University for support.
RIO counted on the support of CNPq, Brazil via a {\em Bolsa de Produtividade em Pesquisa} (304475/2019-0) and a {\em Universal} grant (432310/2018-5).
GR was partially supported by  a Capes/PNPD fellowship 888887.313738/2019-00 while he was a post doc at Federal University of Bahia (UFBA).

\noindent\textbf{Data Availability Statement .}
Data sharing is not applicable to this article as no datasets were generated or analysed during the current study.

\section{Preliminaries}
~
\par In this section we review some basic concepts. We provide the definition of large deviations principle and weak topology, and collect some properties of the relative entropy.

\begin{definition}[Large deviation principle]\label{def:LDP} A sequence $\big( \bb{P}_n \big)_{n \in \N}$ of probabilities over a metric space $(\mathfrak{X}, \d_{\mathfrak{X}})$ satisfies a large deviation principle with rate function $I$ if

\begin{enumerate}
\item (Lower bound) For any open set $\mc{O} \subset \mathfrak{X}$,
\begin{equation}
\liminf_{n \to \infty}\frac{1}{n} \log  \bb{P}_n (\mc{O}) \geq -\inf_{x \in \mc{O}} I(x);
\end{equation}

\item (Upper bound) For any closed set $\mc{C} \subset \mathfrak{X}$,
\begin{equation}
\limsup_{n \to \infty}\frac{1}{n} \log  \bb{P}_n(\mc{C}) \leq -\inf_{x \in \mc{C}} I(x).
\end{equation}
\end{enumerate}
\end{definition}

The weak topology on $\mc{P}(M)$ is defined as the topology generated by the functionals
\begin{equation}
\upsilon \mapsto \int \varphi \, \d \upsilon,
\end{equation}
where $\varphi \in C_b(M)$ is a continuous bounded function.

A metric compatible with the weak topology is the bounded Lipschitz metric
\begin{equation}
\d_{\rm BL}(\mu, \upsilon) = \sup \left \{ \left| \int f \, \d \upsilon - \int f \, \d \mu \right|: f \in BL(M) \right\},
\end{equation}
where $BL(M)$ is the class of $1$-Lipschitz functions $f : M \to \bb{R}$ bounded by one.

For the next lemma, let $x \wedge y$ denote the minimum between $x$ and $y$.
\begin{lemma}
Let $(X, Y)$ be a coupling of two distributions $\mu$ and $\upsilon$. Then
\begin{equation}
\d_{\rm BL}(\mu,\upsilon)\leq \bb{E}(\d(X,Y) \wedge 2).
\end{equation}
\end{lemma}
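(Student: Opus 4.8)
The plan is to bound the quantity $\bigl|\int f\,\d\upsilon - \int f\,\d\mu\bigr|$ uniformly over all admissible test functions $f\in BL(M)$, and only then pass to the supremum defining $\d_{\rm BL}$. The point is that the right-hand side $\E(\d(X,Y)\wedge 2)$ does not depend on $f$, so a pointwise-in-$\omega$ bound that is uniform in $f$ suffices.

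First I would use the coupling to rewrite the two integrals as expectations against a single probability space: since $X\sim\mu$ and $Y\sim\upsilon$, we have $\int f\,\d\mu=\E[f(X)]$ and $\int f\,\d\upsilon=\E[f(Y)]$ for every bounded measurable $f$. Hence, by linearity of expectation and Jensen's inequality (equivalently, the triangle inequality for the integral),
\begin{equation}
\left|\int f\,\d\upsilon-\int f\,\d\mu\right|
=\bigl|\E[f(Y)-f(X)]\bigr|
\le \E\bigl|f(Y)-f(X)\bigr|.
\end{equation}

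The key step is a pointwise estimate on the integrand that exploits the two defining properties of $f\in BL(M)$ simultaneously. Because $f$ is $1$-Lipschitz, $|f(Y)-f(X)|\le \d(X,Y)$; because $f$ is bounded by one, $|f(Y)-f(X)|\le |f(Y)|+|f(X)|\le 2$. Combining these gives the single inequality $|f(Y)-f(X)|\le \d(X,Y)\wedge 2$, valid pointwise and crucially \emph{uniformly in} $f$. Taking expectations then yields $\E|f(Y)-f(X)|\le \E(\d(X,Y)\wedge 2)$ for every $f\in BL(M)$.

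Finally I would take the supremum over $f\in BL(M)$ on the left, which by the definition of $\d_{\rm BL}$ produces exactly $\d_{\rm BL}(\mu,\upsilon)$, while the right-hand side is unchanged, giving the claim. I do not expect a genuine obstacle here: the argument is a short chain of elementary inequalities. The only point that deserves care is the order of operations — the bound $\d(X,Y)\wedge 2$ must be derived before taking the supremum over $f$, so that the two $f$-dependent estimates (Lipschitz and boundedness) can be merged into an $f$-free upper bound.
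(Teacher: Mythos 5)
Your proof is correct and follows essentially the same route as the paper's: both derive the pointwise, $f$-uniform bound $|f(X)-f(Y)|\le \d(X,Y)\wedge 2$ from the Lipschitz and boundedness properties, take expectations, and then pass to the supremum over $f\in BL(M)$. Your write-up is simply a more detailed version of the same argument.
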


\begin{proof}
Let $x, y \in M$ and notice that, for each $f \in BL(M)$,
\begin{equation}
|f(x) -f(y)| \leq \d(x,y) \wedge 2,
\end{equation}
since $f$ is 1-Lipschitz and bounded by one. The proof is now complete by noting that
\begin{equation}
\left| \int f \, \d \upsilon - \int f \, \d \mu \right| = \big| \bb{E} \big(f(X)-f(Y) \big) \big| \leq \bb{E}(\d(X,Y) \wedge 2).
\end{equation}
\end{proof}

\bigskip

Equation~\eqref{eq:variational} is called the variational formulation of entropy, and it readily implies the so-called \textit{entropy inequality}
\begin{equation}\label{eq:entropy_inequality}
\int f \d \upsilon \leq H(\upsilon|\mu) + \log \int e^f \d \mu,
\end{equation}
for any measurable bounded function $f$. We will also make use of the \textit{integral formulation} of the relative entropy, provided in the next lemma. This formulation will be the key result used to approximate relative entropies in the discrete case to the general case.
\begin{lemma}\label{lemma:entropy}
The variational formula of the relative entropy~\eqref{eq:variational} is equivalent to the following integral formulation of the entropy
\begin{equation}
H(\upsilon|\mu)=\begin{cases}
\int \frac{\d\upsilon}{\d\mu}\log \frac{\d\upsilon}{\d\mu} \d \mu, & \text{ if } \upsilon \ll \mu, \\
+\infty, & \text{ otherwise}.
\end{cases}
\end{equation}
\end{lemma}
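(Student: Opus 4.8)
The plan is to verify that the supremum in~\eqref{eq:variational} coincides with the integral expression by splitting into the cases $\upsilon \ll \mu$ and $\upsilon \not\ll \mu$. Throughout, write $g := \frac{\d\upsilon}{\d\mu}$ when it exists. Since $t \mapsto t\log t$ is bounded below on $[0,\infty)$, the integral $\int g\log g\,\d\mu$ is always well defined with values in $(-\infty,+\infty]$, and by Jensen's inequality together with $\int g\,\d\mu = 1$ it is in fact nonnegative; this lets me manipulate it freely, the value $+\infty$ being allowed.

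For $\upsilon \not\ll \mu$, I would show the supremum equals $+\infty$, matching the integral formula. Choose a measurable set $A$ with $\mu(A) = 0$ and $\upsilon(A) > 0$, and test~\eqref{eq:variational} with $f = c\,\mathbbm{1}_A$ for $c > 0$. Then $\int f\,\d\upsilon = c\,\upsilon(A)$, while $\int e^f\,\d\mu = e^c\mu(A) + \mu(A^c) = 1$, so the corresponding value is $c\,\upsilon(A)$, which diverges as $c \to \infty$.

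Now suppose $\upsilon \ll \mu$. For the upper bound $\text{(variational)} \le \text{(integral)}$, I would use the Fenchel--Young inequality $xy \le x\log x - x + e^y$, valid for $x \ge 0$ and $y \in \bb{R}$, which encodes the convex duality between $t \mapsto t\log t - t$ and $y \mapsto e^y$. The bracket $\int f\,\d\upsilon - \log\int e^f\,\d\mu$ is invariant under replacing $f$ by $f +$ constant, so I may normalize any bounded measurable $f$ so that $\int e^f\,\d\mu = 1$. Applying the inequality pointwise with $x = g$ and $y = f$, integrating against $\mu$, and using $\int g\,\d\mu = \int e^f\,\d\mu = 1$, gives $\int f\,\d\upsilon \le \int g\log g\,\d\mu$; taking the supremum over $f$ yields the bound.

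For the matching lower bound, the natural candidate is $f = \log g$, for which a formal computation gives $\int \log g\,\d\upsilon - \log\int g\,\d\mu = \int g\log g\,\d\mu$. Since $\log g$ need not be bounded, I would approximate it by truncations $f_N = (-N) \vee (\log g \wedge N)$, setting $f_N = -N$ on the $\upsilon$-null set $\{g = 0\}$, and show that the corresponding values converge to $\int g\log g\,\d\mu$ as $N \to \infty$. The main obstacle is precisely this limiting step: one must control $\int f_N\,\d\upsilon \to \int g\log g\,\d\mu$ and $\int e^{f_N}\,\d\mu \to 1$ simultaneously, handling both the divergence of $\log g$ near $\{g = 0\}$ and near $\{g = \infty\}$, and covering the case $\int g\log g\,\d\mu = +\infty$ via Fatou's lemma applied to the positive part. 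The two cases $\upsilon \ll \mu$ and $\upsilon \not\ll \mu$ together give the claimed equivalence.
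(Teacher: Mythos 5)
The paper does not actually prove this lemma: the authors explicitly refrain and refer the reader to Gray's book (Theorem 5.2.1 there), so there is no in-paper argument to compare against. Your proposal is a correct, self-contained proof of the standard Donsker--Varadhan duality, and its structure (singular case via $f=c\,\mathbbm{1}_A$; upper bound via the Fenchel--Young inequality $xy\le x\log x-x+e^y$ after normalizing $\int e^f\,\d\mu=1$; lower bound via truncations of $\log g$) is exactly the classical route. The one step you flag as the ``main obstacle'' is in fact routine and worth spelling out: with $f_N=(-N)\vee(\log g\wedge N)$ you have $e^{f_N}\le g+e^{-N}$, so $\log\int e^{f_N}\,\d\mu\le\log(1+e^{-N})\to 0$ (you do not even need the exact limit, only this one-sided bound); and $\int f_N\,\d\upsilon=\int g f_N\,\d\mu\to\int g\log g\,\d\mu$ by monotone convergence on $\{g\ge 1\}$ (where $gf_N\uparrow g\log g\ge 0$, covering the case of an infinite integral) and dominated convergence on $\{g<1\}$ (where $|gf_N|\le e^{-1}$). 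With that observation made explicit, your argument is complete and gives the reader something the paper only cites.
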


We refrain from presenting the proof of the lemma above, and refer the reader to~\cite[Theorem 5.2.1]{gray}.

\section{Discretization}\label{sec:discretization}
~
\par In this section we present the discretization procedure used for the space $M$ and related constructions for measures and random variables.

We start by discretizing the space. Let $\mu \in \mc{P}(M)$ and recall that, since $M$ is a Polish space, there exists, for each $m \in \bb{N}$, a compact set $K_m$ with
\begin{equation}\label{eq:choice_K}
\mu(K_{m}^{\complement}) \leq \frac{e^{-m^{2}-1}}{m}.
\end{equation}

The support of the measure $\mu$ is contained in the closure of the union of the compacts $K_{m}$. Notice that the collection of probability measures supported on the closure of $\cup_{m=1}^{\infty} K_{m}$ forms a closed subset of $\mc{P}(M)$, and thus it is enough to prove a large deviations principle for this subspace (see~\cite[Lemma 4.1.5]{dembo_zeitouni}). We assume from now on that
\begin{equation}
M = \overline{\bigcup_{m=1}^{\infty} K_{m}}.
\end{equation}

Given a sequence of partitions $\big( \mc{A}_{m} \big)_{m \in \bb{N}}$, let $\mathcal{F}_{m}$ and $\mathcal{F}_{\infty}$ denote the $\sigma$-algebras generated by $\mathcal{A}_{m}$ and by the union $\cup_{m=1}^{\infty}\mathcal{A}_{m}$, respectively. We write $\mathcal{B}(M)$ for the Borel $\sigma$-algebra in $M$.
\begin{lemma}\label{lemma:good_partition}
There exists a sequence of nested partitions $\big( \mc{A}_{m} \big)_{m \in \bb{N}}$ such that $\mc{A}_{m} = \{ A_{m, 1}, \dots, A_{m, \ell_{m}} \}$ and
\begin{itemize}
\item $\diam(A_{m, i})<\frac{1}{m}$, if $i=1,\dots, \tilde{\ell}_{m}$, for some $\tilde{\ell}_{m} \leq \ell_{m}$.

\item $K_{m}^{\complement} =  \bigcup_{i=\tilde{\ell}_{m}+1}^{\ell_{m}} A_{m, i}$.

\item $\mathcal{F}_{\infty} = \mathcal{B}(M)$.
\end{itemize}
\end{lemma}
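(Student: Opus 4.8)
The plan is to build each $\mc{A}_m$ as a common refinement of two auxiliary finite partitions: one that handles the diameter and bad-set requirements coming from $K_m$, and one that forces the generated $\sigma$-algebra to recover all of $\B(M)$. First I would treat the geometry of $K_m$. Fix $m$. Since $K_m$ is compact, it is covered by finitely many open balls $B_1, \dots, B_r$ of radius $1/(3m)$, each of diameter at most $2/(3m) < 1/m$. Disjointifying inside $K_m$ by setting $C_i := \big(B_i \setminus (B_1 \cup \dots \cup B_{i-1})\big) \cap K_m$ and discarding empty sets yields finitely many pairwise disjoint Borel sets, each of diameter strictly less than $1/m$, whose union is $K_m$. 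Together with the single set $K_m^{\complement}$ these form a finite Borel partition $\mc{P}_m$ of $M$ in which $K_m$ is split into small cells and $K_m^{\complement}$ is one cell.

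Next, to guarantee the third bullet, I would introduce a partition that sees the whole topology. Fix a countable dense set $\{z_k\}_{k \in \bb{N}}$ in the separable space $M$ and let $\mc{R}_m$ be the finite Borel partition of $M$ generated by the balls $\{B(z_k, 1/j) : k \le m,\ j \le m\}$. Since these balls, ranging over all $k$ and $j$, form a countable base for the topology of $M$, the $\sigma$-algebra generated by $\bigcup_m \mc{R}_m$ contains every such ball and hence equals $\B(M)$.

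I would then set $\mc{A}_m$ to be the common refinement of $\mc{P}_1, \dots, \mc{P}_m$ and $\mc{R}_1, \dots, \mc{R}_m$, again a finite partition of $M$ into Borel sets, and verify the three properties. Nestedness is immediate: $\mc{A}_{m+1}$ refines a larger family than $\mc{A}_m$, hence refines $\mc{A}_m$. For the diameter bound, any cell of $\mc{A}_m$ contained in $K_m$ is a subset of one of the small cells $C_i$ of $\mc{P}_m$, so it has diameter $<1/m$; these are the cells $A_{m,1}, \dots, A_{m,\tilde{\ell}_m}$. For the bad-set condition, since $\mc{A}_m$ refines $\mc{P}_m$ and $K_m^{\complement}$ is a cell of $\mc{P}_m$, each cell of $\mc{A}_m$ lies entirely inside $K_m$ or entirely inside $K_m^{\complement}$; the latter cells are precisely $A_{m,\tilde{\ell}_m+1}, \dots, A_{m,\ell_m}$ and their union is $K_m^{\complement}$. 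Finally $\F_\infty \subseteq \B(M)$ because every cell is Borel, while $\F_\infty \supseteq \sigma\big(\bigcup_m \mc{R}_m\big) = \B(M)$ by construction, so $\F_\infty = \B(M)$.

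The only genuinely delicate point is this last identity: a partition that merely chops $K_m$ into small pieces need not separate points lying outside every $K_m$, and its cells could then fail to generate $\B(M)$. Folding in the base-generated partitions $\mc{R}_m$ removes the obstacle at no cost, because intersecting with them preserves both the diameter bound (small cells remain small) and the bad-set structure ($K_m$ stays a union of cells). I would also observe that one may harmlessly assume the $K_m$ increasing by replacing $K_m$ with $\bigcup_{j \le m} K_j$, which preserves the measure bound~\eqref{eq:choice_K}; this is not needed here but is convenient in the sequel.
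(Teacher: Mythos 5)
Your proposal is correct and follows essentially the same route as the paper: chop $K_m$ into finitely many small cells by compactness, keep $K_m^{\complement}$ as a union of cells, and intersect with a countable base of balls so that $\mathcal{F}_{\infty}$ recovers $\mathcal{B}(M)$, with nestedness obtained by refinement. The only cosmetic difference is that the paper folds in a single ball $B^m$ at stage $m$ and then passes to refinements, whereas you take the common refinement with all balls $B(z_k,1/j)$, $k,j\le m$, explicitly; both yield the same conclusion.
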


\begin{proof}
Notice that if we can construct partitions $\mc{A}_{m}$ for each $m$ that satisfy the three first requirements of the lemma without requiring them to be nested, it is possible to take refinements in order to obtain a nested sequence.

Recall the definition the compact set $K_{m}$ in~\eqref{eq:choice_K}. By compactness, it is possible to partition $K_{m}$ into subsets $\{ C_{m, 1}, \dots, C_{m, \bar{\ell}_m} \}$ of diameter at most $\frac{1}{m}$, so that $\mc{C}_m=\{K_{m}^{\complement}, C_{m, 1}, \dots, C_{m, \bar{\ell}_m}\}$ defines a partition of $M$.

Consider an enumeration $\big(B^{i}\big)_{i \in \bb{N}}$ of balls of rational radius and centered in a countable dense subset of $M$. We now define the partition $\mc{A}_{m}$ via the intersections of sets in $\mc{C}_{m}$ with $B^{m}$ and its complement. We write
\begin{equation}
\mc{A}_{m} = \{ A_{m, 1}, \dots, A_{m, \ell_{m}} \},
\end{equation}
where $A_{m, i}$, $i \leq \tilde{\ell}_{m}$, denotes the sets contained in $K_{m}$ and  $A_{m, i}$, $\tilde{\ell}_{m}+1 \leq i \leq \ell_{m}$, indicates the sets contained in $K_{m}^{\complement}$.

Notice that the first two statements about the partition $\mc{A}_{m}$ are immediately verified. To check the last claim, notice that $B^{i} \in \mathcal{F}_{i}$, and thus $B^{i} \in \mc{F}_{\infty}$, for all $i \in \bb{N}$, which implies $\mathcal{F}_{\infty} = \mathcal{B}(M)$ and concludes the proof.
\end{proof}

We select a subset $M_m:=\{a_{m, 1}, \dots, a_{m, \ell_{m}} \} \subset M$ such that $a_{m, i} \in A_{m, i}$ for $i=1, \dots,\ell_m$ and turn $(\mc{A}_{m},M_{m})$ into a tagged partition. We will furthermore assume that $M_m \subset M_{m+1}$.

For each $m \in \bb{N}$, the tagged partition $(\mc{A}_{m}, M_{m})$ defines a natural projection $\pi^{m}: M \to M_{m}$ via
\begin{equation}
\pi^m(x)=a_{m, i}, \text{ if } x \in A_{m, i}.
\end{equation}
This allows us to define, for any measure $\upsilon \in \mc{P}(M)$, its discretized version $\upsilon^{m} \in \mc{P}(M)$ as the probability measure supported in $M_m$ given by the pushforward of $\upsilon$ via the map $\pi^{m}$, i.e.
\begin{equation}
\upsilon^{m}(a_{m, i})=\upsilon \big( (\pi^{m})^{-1}(a_{m, i}) \big) = \upsilon(A_{m, i}), \text{ for }i=1,\dots,\ell_m.
\end{equation}
Random elements are also discretized with the aid of the projection maps $\pi^{m}$. If $\big( X_i \big)_{i \in \bb{N}}$ is an i.i.d.\ sequence of random elements with distribution $\mu \in \mc{P}(M)$, then $X_{i}^{m}=\pi^m(X_i)$ yields an i.i.d.\ sequence of random elements distributed according to $\mu^{m}$. 

The empirical measure for the discretized elements is given by
\begin{equation} \label{eq:DefLnk}
L_n^{m}:=\frac{1}{n}\sum_{i=1}^n\delta_{X_i^{m}}.
\end{equation}
Since, for each $m \in \bb{N}$, the elements $X_{i}^{m}$ take values on the finite space $M_{m}$, we know that the sequence of empirical measures $\big( L_n^m \big)_{n \in \bb{N}}$ satisfies a large deviations principle on the space $\mc{P}(M_{m})$ with rate function $H( \,\cdot\, |\mu^m)$. Via~\cite[Lemma 4.1.5]{dembo_zeitouni}, we can extend these large deviation principles to the whole space $\mc{P}(M)$ with rate function also given by $H( \,\cdot\, |\mu^m)$ (note that $H( \upsilon |\mu^m)$ is infinite if $\upsilon \notin \mc{P}(M_{m})$).

Lemma~\ref{lemma:entropy} yields the following expression for the rate function $H( \upsilon|\mu^m)$, when $\upsilon \in \mc{P}(M_{m})$:
\begin{equation}
H(\upsilon|\mu^m) = \sum_{a \in M_m}\upsilon(a)\log \frac{\upsilon(a)}{\mu^m(a)}.
\end{equation}

\section{Proof of Theorem \ref{theoem:sanov}}\label{sec:theorem}
~
\par In this section we present our approach to the proof of Sanov's Theorem. Our goal is to deduce that the empirical measures $L_{n}$ given by~\eqref{eq:DefLn} satisfy a large deviations principle from the information that the sequences $\big( L_n^m \big)_{n \in \bb{N}}$ satisfy large deviations principles, for all $m \in \bb{N}$. Since the rate function given by Sanov's Theorem (Theorem~\ref{theoem:sanov}) is the relative entropy, the following two lemmas that relate the entropies in discrete and Polish spaces are the central pieces in our proof.
\begin{lemma}\label{le:supgoodpart} For any two probability measures $\mu, \upsilon \in \mc{P}(M)$,
\begin{equation}\label{eq:limit_entropy}
\sup_m H(\upsilon^m|\mu^m)=\lim_{m} H(\upsilon^m|\mu^m)=H(\upsilon|\mu).
\end{equation}
Furthermore, if $\sup_m H(\upsilon^{m}| \mu^m)<\infty$ then there exists a positive constant $c \geq 0$ such that
\begin{equation}
\d_{\rm BL}(\upsilon^m,\upsilon)\leq \frac{c}{m}.
\end{equation}
\end{lemma}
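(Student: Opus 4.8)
The plan is to establish the two assertions separately, both through the variational formula~\eqref{eq:variational}. For the identity~\eqref{eq:limit_entropy}, I would first record that, since $\upsilon^m$ and $\mu^m$ are supported on $M_m$, every bounded function on $M_m$ lifts to a bounded $\F_m$-measurable function on $M$ that is constant on each $A_{m,i}$, and conversely. This yields
\begin{equation*}
H(\upsilon^m|\mu^m)=\sup\left\{\int g\,\d\upsilon-\log\int e^{g}\,\d\mu;\ g\ \text{bounded and }\F_m\text{-measurable}\right\}.
\end{equation*}
Two facts follow at once. Since this supremum is taken over a subclass of all bounded measurable functions, $H(\upsilon^m|\mu^m)\leq H(\upsilon|\mu)$. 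And since the partitions are nested, $\F_m\subset\F_{m+1}$, so the sequence $H(\upsilon^m|\mu^m)$ is non-decreasing, giving $\sup_m H(\upsilon^m|\mu^m)=\lim_m H(\upsilon^m|\mu^m)$.

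It then remains to prove $\sup_m H(\upsilon^m|\mu^m)\geq H(\upsilon|\mu)$, which I expect to be the main obstacle. Fix a bounded measurable $f$; the goal is to approximate $\int f\,\d\upsilon-\log\int e^{f}\,\d\mu$ by admissible quantities at finite levels. I would set $g_m:=\E_\lambda[f\mid\F_m]$ with respect to the reference measure $\lambda:=\tfrac12(\upsilon+\mu)$, which dominates both $\upsilon$ and $\mu$. Because $\F_\infty=\B(M)$ by Lemma~\ref{lemma:good_partition}, the martingale convergence theorem gives $g_m\to f$ in $L^1(\lambda)$, hence in $L^1(\upsilon)$ and $L^1(\mu)$, while $\|g_m\|_\infty\leq\|f\|_\infty$. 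Thus $\int g_m\,\d\upsilon\to\int f\,\d\upsilon$, and since $x\mapsto e^{x}$ is Lipschitz on the common range of the $g_m$ and $f$, also $e^{g_m}\to e^{f}$ in $L^1(\mu)$, so $\log\int e^{g_m}\,\d\mu\to\log\int e^{f}\,\d\mu$. As each $g_m$ is admissible in the displayed supremum for level $m$, passing to the limit gives $\int f\,\d\upsilon-\log\int e^{f}\,\d\mu\leq\sup_m H(\upsilon^m|\mu^m)$, and the supremum over $f$ finishes this part.

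For the quantitative bound, assume $H:=\sup_m H(\upsilon^m|\mu^m)=H(\upsilon|\mu)<\infty$. The coupling $(X,\pi^m(X))$ with $X\sim\upsilon$ realizes $\upsilon$ and $\upsilon^m$, so the coupling lemma gives $\d_{\rm BL}(\upsilon^m,\upsilon)\leq\E(\d(X,\pi^m(X))\wedge 2)$. On the good sets $A_{m,i}$ with $i\leq\tilde\ell_m$ the diameter bound in Lemma~\ref{lemma:good_partition} forces $\d(X,\pi^m(X))<1/m$, while on $K_m^{\complement}$ I bound the cost by $2$, so that
\begin{equation*}
\d_{\rm BL}(\upsilon^m,\upsilon)\leq\frac{1}{m}+2\,\upsilon(K_m^{\complement}).
\end{equation*}
The only remaining task is to control $\upsilon(K_m^{\complement})$ using the finite entropy. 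Applying the entropy inequality~\eqref{eq:entropy_inequality} to $f=\lambda\mathbbm{1}_{K_m^{\complement}}$ and using $\log(1+x)\leq x$ yields $\lambda\,\upsilon(K_m^{\complement})\leq H+e^{\lambda}\mu(K_m^{\complement})$. The choice~\eqref{eq:choice_K} of $K_m$ is calibrated precisely so that the substitution $\lambda=m^2$ cancels the exponential: since $\mu(K_m^{\complement})\leq e^{-m^2-1}/m$, this gives $\upsilon(K_m^{\complement})\leq H/m^2+1/m^3$, whence $\d_{\rm BL}(\upsilon^m,\upsilon)\leq(2H+3)/m$, establishing the claim with $c=2H+3$.
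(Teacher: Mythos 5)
Your proof is correct, and its two halves have different relationships to the paper. The quantitative bound is essentially the paper's own argument (Lemma~\ref{lemma:convergenceNukDiv}): the same coupling $(X,\pi^m(X))$, the same split over $K_m$ and $K_m^{\complement}$, and the same exponential tilting of $\mathbbm{1}_{K_m^{\complement}}$ played against the choice~\eqref{eq:choice_K}; you take the tilt parameter $m^2$ where the paper takes $m$, which changes nothing. The identity~\eqref{eq:limit_entropy}, however, is proved by a genuinely different route for the hard inequality $\sup_m H(\upsilon^m|\mu^m)\geq H(\upsilon|\mu)$. The paper passes to the integral formulation (Lemma~\ref{lemma:entropy}): it builds the Radon--Nikodym martingale $S_m=\tfrac{\d\upsilon^m}{\d\mu^m}\circ\pi^m$ (Lemma~\ref{lemma:martingale}), must split into cases to show it is uniformly integrable, identifies its $L^1(\mu)$-limit with $\tfrac{\d\upsilon}{\d\mu}$, and applies Fatou to $x\log x$. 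You stay entirely inside the variational formulation: you identify $H(\upsilon^m|\mu^m)$ with the supremum over bounded $\F_m$-measurable test functions and push a fixed test function $f$ down to level $m$ via $\E_\lambda[f\mid\F_m]$, $\lambda=\tfrac12(\upsilon+\mu)$, using L\'evy's upward theorem and $\F_\infty=\B(M)$. Your route buys something real: the bounded martingale $\E_\lambda[f\mid\F_m]$ is trivially uniformly integrable, so you need no case distinction on which entropy is finite, no discussion of whether $\upsilon\ll\mu$, and no appeal to the integral formula; the argument covers the infinite case uniformly. What you give up is the explicit convergence of the densities $S_m\to\tfrac{\d\upsilon}{\d\mu}$, which the paper's reader gets as a byproduct. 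Two cosmetic points: you overload $\lambda$ (reference measure in part one, tilting parameter in part two), and you should fix a concrete version of $\E_\lambda[f\mid\F_m]$ on $\lambda$-null cells of $\mc{A}_m$ --- harmless, since both $\upsilon$ and $\mu$ are dominated by $\lambda$, but worth a word.
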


\begin{remark}\label{remark:rate_function}
Notice that the lemma above provides an alternative expression for the rate function in Sanov's Theorem, depending on the sequence of partitions chosen. Even more is true: the supremum in~\eqref{eq:limit_entropy} can be taken over all finite partitions of $M$. In fact, simply notice that, if $(\mathcal{A}, \mathcal{M})$ is a dotted partition of $M$ with projection map $\pi: M \to \mathcal{M}$, then any function $f: \mathcal{M} \to \bb{R}$ can be extended to $M$ via $\tilde{f}=f \circ \pi$. To conclude, apply the variational formulation of relative entropy to obtain $H(\upsilon^{\mathcal{A}}|\mu^\mathcal{A}) \leq H(\upsilon|\mu)$.
\end{remark}

\begin{lemma}\label{lemma:supinf} For any $\mu, \upsilon \in \mc{P}(M)$ and $m_{0} \in \bb{N}$,
\begin{equation}
\sup_{m \geq m_{0}} \inf_{\sigma \in \bar{B}_{\frac{1}{\sqrt{m}}}(\upsilon)}H(\sigma|\mu^m)=H(\upsilon|\mu).
\end{equation}
\end{lemma}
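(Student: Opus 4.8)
The plan is to prove the two bounds $\sup_{m\ge m_0} I_m\le H(\upsilon|\mu)$ and $\sup_{m\ge m_0} I_m\ge H(\upsilon|\mu)$ separately, where I abbreviate $I_m:=\inf_{\sigma\in\bar B_{1/\sqrt m}(\upsilon)}H(\sigma|\mu^m)$. The upper bound is the soft direction. By Remark~\ref{remark:rate_function} one always has $H(\upsilon^m|\mu^m)\le H(\upsilon|\mu)$, and, when $H(\upsilon|\mu)<\infty$, the furthermore part of Lemma~\ref{le:supgoodpart} gives $\d_{\rm BL}(\upsilon^m,\upsilon)\le c/m\le 1/\sqrt m$ for all large $m$, so that $\upsilon^m$ is an admissible competitor in the infimum defining $I_m$. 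Using $\sigma=\upsilon^m$ then yields $I_m\le H(\upsilon^m|\mu^m)\le H(\upsilon|\mu)$ on the range of $m$ that governs the supremum.

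For the lower bound I would use the variational formula~\eqref{eq:variational}. Fix $\varepsilon>0$ and choose a test function attaining $H(\upsilon|\mu)$ up to $\varepsilon$; the first technical point is to argue that this near-optimal $f$ may be taken \emph{bounded and Lipschitz}, which I would justify by a standard density argument reducing the supremum in~\eqref{eq:variational} first to $f\in C_b(M)$ and then to bounded Lipschitz $f$ (using that $\mu$ and $\upsilon$ are tight). Writing $L:=\max\{\|f\|_\infty,\mathrm{Lip}(f)\}$, the entropy inequality~\eqref{eq:entropy_inequality} applied to $\mu^m$ gives, for every $\sigma$ in the ball, $H(\sigma|\mu^m)\ge \int f\,\d\sigma-\log\int e^f\,\d\mu^m$. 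Since $f/L\in BL(M)$, the ball constraint controls the first term through $\int f\,\d\sigma\ge\int f\,\d\upsilon-L/\sqrt m$. It then remains to compare $\int e^f\,\d\mu^m=\int e^{f\circ\pi^m}\,\d\mu$ with $\int e^f\,\d\mu$.

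This last comparison is where I expect the main work to lie. On the good cells one has $\diam(A_{m,i})<1/m$, hence $|f\circ\pi^m-f|\le L/m$ there, whereas on $K_m^\complement$ one only has the crude bound $|f\circ\pi^m-f|\le 2\|f\|_\infty$; splitting the integral accordingly and invoking the tail estimate~\eqref{eq:choice_K} bounds the bad part by $e^{L}\,e^{-m^2-1}/m$, which vanishes as $m\to\infty$ for the fixed $f$. Together these give $\log\int e^f\,\d\mu^m\le \log\int e^f\,\d\mu+o(1)$, and combining the three estimates yields $I_m\ge H(\upsilon|\mu)-\varepsilon-o(1)$ as $m\to\infty$; since $\sup_{m\ge m_0}I_m\ge\limsup_m I_m$, letting $\varepsilon\to0$ produces $\sup_{m\ge m_0}I_m\ge H(\upsilon|\mu)$. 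The infinite-entropy case is handled identically by replacing the near-optimal $f$ by functions making $\int f\,\d\upsilon-\log\int e^f\,\d\mu$ arbitrarily large. The delicate points I anticipate are the reduction to Lipschitz test functions and, above all, keeping the exponential-moment comparison uniform over the non-compact space $M$, for which the super-exponential choice of $\mu(K_m^\complement)$ in~\eqref{eq:choice_K} is precisely tailored.
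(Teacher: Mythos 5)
Your proposal is correct in outline, but the hard direction ($\sup_m I_m \ge H(\upsilon|\mu)$) follows a genuinely different route from the paper's. The paper argues by contraposition on sublevel sets: assuming $I^0(\upsilon)<\alpha$, it picks a discrete near-minimizer $\sigma\in\bar B_{1/\sqrt m}(\upsilon)$ with $H(\sigma|\mu^m)\le\alpha$ and \emph{lifts} it to a measure $\rho$ on $M$ by redistributing the mass $\sigma(A_{m,i})$ inside each cell according to the conditional law $\mu(\,\cdot\,|A_{m,i})$; a direct computation shows $H(\rho|\mu)=H(\sigma|\mu^m)\le\alpha$ and $\d_{\rm BL}(\rho,\upsilon)<\varepsilon$, and the conclusion then follows from lower semicontinuity of $H(\,\cdot\,|\mu)$ in the weak topology. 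You instead run the classical Donsker--Varadhan computation: a near-optimal bounded Lipschitz test function $f$, the entropy inequality against $\mu^m$, the ball constraint to control $\int f\,\d\sigma$, and a comparison of $\log\int e^{f\circ\pi^m}\d\mu$ with $\log\int e^f\d\mu$ via the good-cell/bad-cell split; that comparison is sound, since the bad part contributes at most $e^{\|f\|_\infty}e^{-m^2-1}/m$ by~\eqref{eq:choice_K} and the good part costs a factor $e^{L/m}$, both $o(1)$ for fixed $f$. The one place where your argument hides real work is the reduction of the supremum in~\eqref{eq:variational} from bounded measurable to bounded Lipschitz test functions: this is true and standard (restrict to $C_b(M)$ as in Dupuis--Ellis, then pass to Lipschitz functions via the inf-convolutions $f_n(x)=\inf_y\{f(y)+n\,\dist(x,y)\}$, which increase to $f$), but it is a nontrivial external input that the paper deliberately avoids. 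The trade-off is roughly even, though: the paper's route needs lower semicontinuity of $H(\,\cdot\,|\mu)$ for the weak topology, which is itself usually established by the very same reduction to continuous test functions, so neither proof is strictly more elementary. Your easy direction is identical to the paper's, including the shared (minor) caveat that $\upsilon^m\in\bar B_{1/\sqrt m}(\upsilon)$ is only guaranteed for $m$ large, while the supremum in the statement runs over all $m\ge m_0$.
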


We prove Lemma~\ref{le:supgoodpart} in Section~\ref{sec:proof:le:supgoodpart} and Lemma~\ref{lemma:supinf} in Section~\ref{section:proof_supinf}.

The next lemma is a result of exponential equivalence.
\begin{lemma}\label{prop:boundLkLnk}
Let $L_n$ and $L_n^m$ as defined in Equations~\eqref{eq:DefLn} and~\eqref{eq:DefLnk}, respectively. Then
\begin{equation}
\bb{P}\bigg( \d_{\rm BL}(L_n,L_n^m) > \frac{3}{m}\bigg) \leq \exp\big(-mn\big).
\end{equation}
\end{lemma}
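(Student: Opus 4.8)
The plan is to build an explicit coupling of $L_n$ and $L_n^m$ from the projection $\pi^m$, control the resulting transport cost particle by particle, and reduce the statement to a tail estimate for the number of sample points that fall in the badly behaved region $K_m^\complement$. First I would condition on the realization of $X_1, \dots, X_n$, so that $L_n$ and $L_n^m$ become fixed elements of $\mc{P}(M)$. Sampling an index $I$ uniformly in $\{1, \dots, n\}$ and setting $X = X_I$ and $Y = \pi^m(X_I)$ produces a coupling of $L_n$ and $L_n^m$, since $X_I$ has conditional law $L_n$ and $\pi^m(X_I) = X_I^m$ has conditional law $L_n^m$. The coupling inequality $\d_{\rm BL}(\mu,\upsilon) \leq \bb{E}(\d(X,Y) \wedge 2)$ established earlier then gives, almost surely,
\begin{equation}
\d_{\rm BL}(L_n, L_n^m) \leq \bb{E}\big[\d(X_I, \pi^m(X_I)) \wedge 2 \mid X_1, \dots, X_n\big] = \frac{1}{n}\sum_{i=1}^n \big(\d(X_i, \pi^m(X_i)) \wedge 2\big).
\end{equation}

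Next I would bound each summand using the two categories of sets in the partition $\mc{A}_m$ from Lemma~\ref{lemma:good_partition}. If $X_i$ lies in a good set $A_{m,j}$ with $j \leq \tilde{\ell}_m$, then $X_i$ and its tag $a_{m,j} = \pi^m(X_i)$ both belong to a set of diameter less than $1/m$, whence $\d(X_i, \pi^m(X_i)) < 1/m$; otherwise $X_i \in K_m^\complement$ and the truncation gives $\d(X_i, \pi^m(X_i)) \wedge 2 \leq 2$. Writing $N = \sum_{i=1}^n \mathbbm{1}\{X_i \in K_m^\complement\}$, this yields the pointwise estimate
\begin{equation}
\d_{\rm BL}(L_n, L_n^m) \leq \frac{1}{m} + \frac{2N}{n}.
\end{equation}
Consequently the event $\{\d_{\rm BL}(L_n, L_n^m) > 3/m\}$ is contained in $\{N > n/m\}$, and it remains to show $\bb{P}(N > n/m) \leq e^{-mn}$.

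The main point is this last tail bound, where the super-exponential smallness of $\mu(K_m^\complement)$ encoded in~\eqref{eq:choice_K} is precisely calibrated. Since $N$ is a $\mathrm{Binomial}(n, p)$ variable with $p = \mu(K_m^\complement) \leq e^{-m^2-1}/m$, I would apply the standard bound $\binom{n}{k} \leq (en/k)^k$ to the smallest integer $k > n/m$, obtaining
\begin{equation}
\bb{P}(N > n/m) \leq \binom{n}{k} p^k \leq \Big(\frac{enp}{k}\Big)^k \leq (emp)^k.
\end{equation}
Because $emp \leq e \cdot e^{-m^2-1} = e^{-m^2} < 1$ and $k > n/m$, the right-hand side is at most $(e^{-m^2})^{n/m} = e^{-mn}$, which closes the argument. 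The only delicate points are verifying that the constant chosen in~\eqref{eq:choice_K} makes $emp$ land below $e^{-m^2}$ and handling the non-integer threshold $n/m$ (using that the base $e^{-m^2}$ is below $1$, so raising it to a larger exponent only helps); both are routine once the coupling reduction above is in place.
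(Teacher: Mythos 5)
Your proof is correct and follows essentially the same route as the paper's: the pointwise bound $\d_{\rm BL}(L_n,L_n^m)\leq \frac{1}{m}+\frac{2}{n}\sum_{i=1}^n 1_{\{X_i\in K_m^{\complement}\}}$ followed by a union-bound/binomial tail estimate calibrated to~\eqref{eq:choice_K}. The only differences are cosmetic: you make explicit the uniform-index coupling behind the first inequality and handle the integrality of $n/m$, both of which the paper glosses over.
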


\begin{proof}

Observe that if $X_i \in A_{m, j}$ for some $j=1,\dots,\tilde{\ell}_m$ then $d(X_i,X_i^{m})\leq 1/m$. In particular,
\begin{equation}
\d_{BL}(L_n,L_n^m)\leq \frac{1}{n}\sum_{i=1}^n \d(X_i,X_i^{m}) \wedge 2 \leq \frac{1}{m}+\frac{2}{n}\sum_{i=1}^n1_{\{X_i \in K_m^{\complement}\}}.
\end{equation}
This implies
\begin{equation}
\bb{P}\bigg( \d_{BL}(L_n,L_n^m)>\frac{3}{m}\bigg)\leq \bb{P}\bigg(\frac{1}{n}\sum_{i=1}^n 1_{\{X_i \in K_m^{\complement}\}}>\frac{1}{m}\bigg)
\end{equation}
In order to bound the last probability, we use union bound and independence to obtain
\begin{equation}
\begin{split}
\bb{P}\bigg(\frac{1}{n}\sum_{i=1}^n 1_{\{X_i \in K_m^{\complement}\}}>\frac{1}{m}\bigg) & \leq \sum_{A \subset [n]: |A|=\frac{n}{m}} \bb{P} \big(X_i \in K_m^{\complement}, \text{ for all } i \in A \big) \\
& \leq \binom{n}{\frac{n}{m}}\Big( \frac{e^{-m^{2}-1}}{m} \Big)^{\frac{n}{m}} \\
& \leq \bigg(em\frac{e^{-m^{2}-1}}{m} \bigg)^{\frac{n}{m}} \\
& \leq \exp\big(-mn\big),
\end{split}
\end{equation}
concluding the proof.
\end{proof}

We are now ready to work on the proof of Theorem~\ref{theoem:sanov}. It is proved in~\cite[Lemma 6.2.6]{dembo_zeitouni} that the sequence $\big( L_n \big)_{n \in \bb{N}}$ is exponentially tight. In particular, there exists a subsequence $\big( L_{n_k} \big)_{n_{k}}$ that satisfies a large deviations principle with rate function $I$.

From now on, we drop the subscript $k$ in $n_{k}$. Notice that
\begin{equation} \label{eq:defI}
-I(\upsilon) = \lim_{\varepsilon\to 0} \lim_{n\to\infty} \frac{1}{n}\log \bb{P}(L_n\in B_\varepsilon(\upsilon)).
\end{equation}
Even though the rate function $I$ might depend on the subsequence, our goal is to prove that this is not the case. In fact, we prove that $I(\, \cdot \,)=H( \, \cdot \,|\mu)$. In Proposition~\ref{prop:HgeqI}, we prove that $H( \, \cdot \,|\mu) \geq I(\, \cdot \,)$, while the opposite inequality is established in Proposition~\ref{prop:HleqI}. This concludes the proof of Theorem~\ref{theoem:sanov}, since any possible subsequence $\big( L_{n_{k}} \big)_{n_{k}}$ that satisfies a large deviations principle does so with the same rate function $H( \,\cdot\, | \mu)$, which implies that the whole sequence also satisfies a large deviations principle.

\begin{proposition} \label{prop:HgeqI}
The function $I$ in Equation~\eqref{eq:defI} satisfies $H(\, \cdot \,|\mu)\geq I(\, \cdot \,)$.
\end{proposition}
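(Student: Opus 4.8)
The plan is to recognize that the claimed inequality $H(\upsilon|\mu) \geq I(\upsilon)$ is precisely the large deviations lower bound with rate $H(\,\cdot\,|\mu)$. Since
\[
-I(\upsilon) = \lim_{\varepsilon \to 0} \lim_{n \to \infty} \frac{1}{n} \log \bb{P}(L_n \in B_\varepsilon(\upsilon)) = \inf_{\varepsilon > 0} \lim_{n \to \infty} \frac{1}{n} \log \bb{P}(L_n \in B_\varepsilon(\upsilon)),
\]
it suffices to prove that for every fixed $\varepsilon > 0$ one has $\liminf_{n} \frac{1}{n} \log \bb{P}(L_n \in B_\varepsilon(\upsilon)) \geq -H(\upsilon|\mu)$. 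If $H(\upsilon|\mu) = \infty$ there is nothing to prove, so I would assume $H(\upsilon|\mu) < \infty$; by Lemma~\ref{le:supgoodpart} this guarantees both $\sup_m H(\upsilon^m|\mu^m) = H(\upsilon|\mu) < \infty$ and $\d_{\rm BL}(\upsilon^m, \upsilon) \leq c/m$.

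The core of the argument transfers the known LDP lower bound for the discretized measures $L_n^m$ to $L_n$. I would fix $m$ large enough that $3/m + c/m < \varepsilon/2$ and $m > H(\upsilon|\mu) + 1$. The triangle inequality for $\d_{\rm BL}$, combined with $\d_{\rm BL}(L_n, L_n^m) \leq 3/m$ and $\d_{\rm BL}(\upsilon^m, \upsilon) \leq c/m$, yields the inclusion
\[
\{L_n^m \in B_{\varepsilon/2}(\upsilon^m)\} \cap \{\d_{\rm BL}(L_n, L_n^m) \leq 3/m\} \subseteq \{L_n \in B_\varepsilon(\upsilon)\},
\]
so that, using Lemma~\ref{prop:boundLkLnk} to control the error term,
\[
\bb{P}(L_n \in B_\varepsilon(\upsilon)) \geq \bb{P}(L_n^m \in B_{\varepsilon/2}(\upsilon^m)) - \exp(-mn).
\]
Since $L_n^m$ satisfies an LDP on $\mc{P}(M)$ with rate $H(\,\cdot\,|\mu^m)$ and $\upsilon^m$ belongs to the open ball $B_{\varepsilon/2}(\upsilon^m)$, the lower bound gives $\liminf_n \tfrac{1}{n}\log \bb{P}(L_n^m \in B_{\varepsilon/2}(\upsilon^m)) \geq -H(\upsilon^m|\mu^m)$, whence $\bb{P}(L_n^m \in B_{\varepsilon/2}(\upsilon^m)) \geq \exp(-n(H(\upsilon^m|\mu^m) + \eta))$ for every $\eta \in (0,1)$ and all large $n$.

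The main obstacle — and the reason Lemma~\ref{prop:boundLkLnk} is stated with the growing exponent $\exp(-mn)$ rather than a fixed exponential rate — is to ensure the subtracted error is negligible next to the main term. This is exactly what the choice $m > H(\upsilon|\mu) + 1 \geq H(\upsilon^m|\mu^m) + \eta$ secures: it forces $\exp(-mn)$ to be exponentially smaller than $\exp(-n(H(\upsilon^m|\mu^m) + \eta))$, so that for large $n$ the difference exceeds, say, half the main term. Taking $\tfrac{1}{n}\log$ and then letting $n \to \infty$ followed by $\eta \to 0$ gives
\[
\liminf_{n \to \infty} \frac{1}{n}\log \bb{P}(L_n \in B_\varepsilon(\upsilon)) \geq -H(\upsilon^m|\mu^m) \geq -H(\upsilon|\mu),
\]
where the final step uses $H(\upsilon^m|\mu^m) \leq \sup_m H(\upsilon^m|\mu^m) = H(\upsilon|\mu)$ from Lemma~\ref{le:supgoodpart}. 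As $\varepsilon > 0$ was arbitrary, letting $\varepsilon \to 0$ produces $-I(\upsilon) \geq -H(\upsilon|\mu)$, which is the desired inequality.
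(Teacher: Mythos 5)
Your proof is correct, and it reaches the conclusion by a route that is a mirror image of the paper's. The paper writes the comparison in the opposite direction, $\bb{P}(L^m_n \in B_\varepsilon(\upsilon^m))\leq \bb{P}(L_n \in \bar{B}_{\varepsilon+\frac{3+c}{m}}(\upsilon))+\bb{P}(\d_{\rm BL}(L_n,L_n^m)>\frac{3}{m})$, so that the discrete LDP \emph{lower} bound on the left is played against the subsequential LDP \emph{upper} bound (with rate $I$, over closed balls) on the right; after sending $\varepsilon\to 0$ this yields $H(\upsilon^m|\mu^m)\geq \min\{\inf_{\bar{B}_{(3+c)/m}(\upsilon)}I,\, m\}$, and the proof is finished by letting $m\to\infty$ and invoking the lower semicontinuity of $I$. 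You instead lower-bound $\bb{P}(L_n\in B_\varepsilon(\upsilon))$ directly by $\bb{P}(L_n^m\in B_{\varepsilon/2}(\upsilon^m))-e^{-mn}$, use only the discrete lower bound together with $H(\upsilon^m|\mu^m)\leq H(\upsilon|\mu)$ from Lemma~\ref{le:supgoodpart}, and then read off $-I(\upsilon)\geq -H(\upsilon|\mu)$ from Equation~\eqref{eq:defI}. What your version buys is that it never uses the upper-bound half of the subsequential LDP nor the lower semicontinuity of $I$, and it avoids the slightly delicate interchange of $\varepsilon\to 0$ with the infimum over shrinking closed balls; the price is the subtraction step, where you correctly observe that the choice $m>H(\upsilon|\mu)+1$ (exploiting the $e^{-mn}$ rate of Lemma~\ref{prop:boundLkLnk}) is exactly what makes the error term negligible against the main term $e^{-n(H(\upsilon^m|\mu^m)+\eta)}$ --- the paper's $\max/\min$ bookkeeping serves the same purpose. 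Both arguments rest on the same three ingredients (finite-space Sanov for $L_n^m$, Lemma~\ref{prop:boundLkLnk}, and Lemma~\ref{le:supgoodpart}), so the difference is one of packaging rather than substance, but your packaging is arguably the more elementary of the two.
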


\begin{proof}
Fix $\upsilon \in \mc{P}(M)$ and notice we can assume that $H(\upsilon | \mu)$ is finite since the statement is trivially verified if otherwise.

Due to Lemma~\ref{le:supgoodpart}, we have
\begin{equation}
\d_{\rm BL}(\upsilon,\upsilon^m) \leq \frac{c}{m},
\end{equation}
for some positive constant $c>0$. In particular, this implies
\begin{equation}\label{eq:first_bound}
\bb{P}( L^m_n \in B_\varepsilon(\upsilon^m))\leq \bb{P}\Big( L_n \in \bar{B}_{\varepsilon+\frac{3+c}{m}}(\upsilon) \Big)+\bb{P} \Big( \d_{\rm BL}(L_n,L_n^m)> \frac{3}{m} \Big),
\end{equation}
which yields
\begin{equation}\label{eq:first_estiamte}
\begin{split}
\frac{1}{n} & \log \bb{P}(L^m_n \in B_\varepsilon(\upsilon^m)) \leq \frac{\log 2}{n} \\
& \qquad \qquad + \max \left\{\frac{1}{n}\log \bb{P}\Big( L_n \in \bar{B}_{\varepsilon+\frac{3+c}{m}}(\upsilon) \Big), \frac{1}{n}\log \bb{P} \Big( \d_{\rm BL}(L_n,L_n^m)> \frac{3}{m} \Big) \right\}.
\end{split}
\end{equation}

Lemma~\ref{prop:boundLkLnk} gives
\begin{equation}
\frac{1}{n}\log \bb{P} \Big( \d_{\rm BL}(L_n,L_n^m)> \frac{3}{ m} \Big) \leq -m,
\end{equation}
and, by taking the limit as $n$ grows in~\eqref{eq:first_estiamte},
\begin{equation}
-\inf_{\sigma \in B_{\varepsilon}(\upsilon^m)} H(\sigma|\mu^{m}) \leq \max \bigg\{-\inf_{\sigma \in \bar{B}_{\varepsilon+\frac{3+c}{m}}(\upsilon)}I(\sigma),-m \bigg\}.
\end{equation}
We now take the limit as $\varepsilon$ goes to zero to obtain
\begin{equation}
-H(\upsilon^m | \mu^m) \leq \max \bigg\{- \inf_{\sigma \in \bar{B}_{\frac{3+c}{m}}(\upsilon)} I (\sigma),-m \bigg\},
\end{equation}
which readily implies, via  Lemma~\ref{le:supgoodpart},
\begin{equation}\label{eq:hgI}
H(\upsilon | \mu) = \sup_{\bar m}H(\upsilon^{\bar m}|\mu^{\bar m}) \geq \min \bigg\{ \inf_{\sigma \in \bar{B}_{\frac{3+c}{m}}(\upsilon)} I(\sigma), m \bigg\},
\end{equation}
for all $m \in \bb{N}$.

Since the function $I$ is lower semicontinuous,
\begin{equation}
\lim_{m \to \infty} \inf_{\sigma \in \bar{B}_{\frac{3+c}{m}}(\upsilon)}I(\sigma)=I(\upsilon).
\end{equation}
In particular,
\begin{equation}
H(\upsilon | \mu) \geq I(\upsilon),
\end{equation}
concluding the proof.
\end{proof}

\begin{proposition} \label{prop:HleqI}
We have $H( \,\cdot\, |\mu)\leq I(\, \cdot \,)$.
\end{proposition}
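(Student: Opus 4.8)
The plan is to run the argument of Proposition~\ref{prop:HgeqI} in reverse: instead of comparing a neighbourhood of $\upsilon^m$ for $L_n^m$ to a neighbourhood of $\upsilon$ for $L_n$, I would control a neighbourhood of $\upsilon$ for $L_n$ by one for $L_n^m$, and then invoke the finite-space large deviations upper bound for $L_n^m$ together with Lemma~\ref{lemma:supinf} (which plays here the role that Lemma~\ref{le:supgoodpart} played in the other direction). Fix $\upsilon \in \mc{P}(M)$ and $\varepsilon>0$. The first step is the exponential-equivalence inclusion associated with Lemma~\ref{prop:boundLkLnk}: on the event $\{\d_{\rm BL}(L_n,L_n^m)\le \frac3m\}$ the triangle inequality gives $\d_{\rm BL}(L_n^m,\upsilon)<\varepsilon+\frac3m$ whenever $L_n\in B_\varepsilon(\upsilon)$, so that
\begin{equation}
\bb{P}\big(L_n\in B_\varepsilon(\upsilon)\big)\le \bb{P}\big(L_n^m\in \bar B_{\varepsilon+\frac3m}(\upsilon)\big)+\bb{P}\Big(\d_{\rm BL}(L_n,L_n^m)>\frac3m\Big).
\end{equation}

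Next I would take $\frac1n\log(\cdot)$, use $\log(a+b)\le\log 2+\max\{\log a,\log b\}$, and let $n\to\infty$. The error term is handled by Lemma~\ref{prop:boundLkLnk}, contributing $-m$, while the main term is bounded using the large deviations upper bound for $L_n^m$ applied to $\bar B_{\varepsilon+\frac3m}(\upsilon)$, which is a closed set, with rate $H(\,\cdot\,|\mu^m)$. This gives
\begin{equation}
\lim_{n\to\infty}\frac1n\log\bb{P}\big(L_n\in B_\varepsilon(\upsilon)\big)\le \max\Big\{-\inf_{\sigma\in \bar B_{\varepsilon+\frac3m}(\upsilon)}H(\sigma|\mu^m),\,-m\Big\}.
\end{equation}
Since $\frac3m<\frac1{\sqrt m}$ for all large $m$, for each such $m$ I can restrict to $\varepsilon$ small enough that $\varepsilon+\frac3m\le \frac1{\sqrt m}$; monotonicity of the infimum under inclusion of balls then lets me enlarge the ball to $\bar B_{1/\sqrt m}(\upsilon)$ (weakening the bound, which is harmless), and letting $\varepsilon\to0$ in the definition~\eqref{eq:defI} of $I$ yields
\begin{equation}\label{eq:Ilower}
I(\upsilon)\ge \min\Big\{\inf_{\sigma\in \bar B_{\frac1{\sqrt m}}(\upsilon)}H(\sigma|\mu^m),\,m\Big\},\qquad\text{for all }m\ge m_0.
\end{equation}

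Finally I would take the supremum over $m\ge m_0$ in~\eqref{eq:Ilower} and invoke Lemma~\ref{lemma:supinf}. If $H(\upsilon|\mu)<\infty$, then by Lemma~\ref{lemma:supinf} each inner infimum is at most $H(\upsilon|\mu)$, so the truncation by $m$ becomes inactive once $m\ge H(\upsilon|\mu)$; hence the supremum of the right-hand side of~\eqref{eq:Ilower} equals $\sup_{m\ge m_1}\inf_{\sigma\in\bar B_{1/\sqrt m}(\upsilon)}H(\sigma|\mu^m)=H(\upsilon|\mu)$ for a suitable $m_1\ge m_0$. If $H(\upsilon|\mu)=\infty$, applying Lemma~\ref{lemma:supinf} with arbitrarily large $m_0$ produces, for every $K$, some $m\ge K$ with $\inf_{\sigma\in\bar B_{1/\sqrt m}(\upsilon)}H(\sigma|\mu^m)\ge K$, forcing the right-hand side of~\eqref{eq:Ilower} to be at least $K$. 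In either case $I(\upsilon)\ge H(\upsilon|\mu)$, which is the claim.

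The step I expect to be most delicate is this last passage to the limit rather than any single estimate: the radius $\frac1{\sqrt m}$ in Lemma~\ref{lemma:supinf} is chosen precisely so that the discretization error $\frac3m$ is negligible at that scale, and one must keep careful track of the order of the limits ($n\to\infty$, then $\varepsilon\to0$, then $\sup_m$), of the truncation $\min\{\,\cdot\,,m\}$ inherited from the exponential tightness bound, and of the degenerate case $H(\upsilon|\mu)=\infty$. The remaining ingredients, namely the inclusion of events and the finite-space upper bound, are routine.
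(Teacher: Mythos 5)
Your proposal is correct and follows essentially the same route as the paper: the inclusion of events via exponential equivalence (Lemma~\ref{prop:boundLkLnk}), the $\max$-decomposition after taking $\frac{1}{n}\log$, the limits $n\to\infty$ then $\varepsilon\to 0$, and finally Lemma~\ref{lemma:supinf} with the same case split on whether $H(\upsilon|\mu)$ is finite. The only cosmetic difference is that you use the threshold $\frac{3}{m}$ and then enlarge the ball to radius $\frac{1}{\sqrt m}$, whereas the paper uses the threshold $\frac{1}{\sqrt m}$ directly.
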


\begin{proof}
Fix $\upsilon \in \mc{P}(M)$ and observe once again that 
\begin{equation*}
\begin{split}
\frac{1}{n} & \log \bb{P}\big( L_n \in B_\varepsilon(\upsilon) \big) \leq  \frac{\log 2}{n} \\
& \qquad \qquad + \max \bigg\{\frac{1}{n} \log \bb{P} \big( L_n^m \in \bar{B}_{\varepsilon+\frac{1}{\sqrt{m}}}(\upsilon) \big), \frac{1}{n} \log \bb{P} \big( \d_{\rm BL} (L_n,L_n^m)>\tfrac{1}{\sqrt{m}} \big) \bigg\}.
\end{split}
\end{equation*}

Taking $n \to \infty$ and $\varepsilon\to 0$, we obtain with the aid of Lemma~\ref{prop:boundLkLnk}
\begin{equation}\label{eq:bound_I}
-I(\upsilon) \leq \max \bigg\{-\inf_{\sigma \in \bar
{B}_{\frac{1}{\sqrt{m}}}(\upsilon)}H(\sigma|\mu^m),\,-m \bigg\}.
\end{equation}

Let us now split the discussion in whether $H(\upsilon|\mu)$ is finite or not. Assume first that this relative entropy is infinite and notice that Lemma~\ref{lemma:supinf} implies
\begin{equation}
\sup_{m}\inf_{\sigma \in \bar
{B}_{\frac{1}{\sqrt{m}}}(\upsilon)}H(\sigma|\mu^m) = \infty,
\end{equation}
which readily implies $I(\upsilon)=\infty$, when combined with~\eqref{eq:bound_I}.

If on the other hand we have $H(\upsilon|\mu)< \infty$, we combine~\eqref{eq:bound_I} and Lemma~\ref{lemma:supinf} with $m_{0} \geq H(\upsilon|\mu)$ to obtain
\begin{equation}
I(\upsilon) \geq \min \bigg\{ \inf_{\sigma \in \bar{B}_{\frac{1}{\sqrt{m}}}(\upsilon)} H(\sigma|\mu^m), m \bigg\} \geq \inf_{\sigma \in \bar{B}_{\frac{1}{\sqrt{m}}}(\upsilon)} H(\sigma|\mu^m),
\end{equation}
for every $m \geq m_{0}$. Taking the supremum in $m$ concludes the proof.
\end{proof}

\section{Proof of Lemma~\ref{le:supgoodpart}}\label{sec:proof:le:supgoodpart}
~
\par In this section we prove Lemma~\ref{le:supgoodpart}. We start with the following preliminary lemma, which in particular implies the second part of Lemma~\ref{le:supgoodpart}. We prove the first part afterwards.
\begin{lemma}\label{lemma:convergenceNukDiv}
If $\sigma \in \mc{P}(M)$ is such that $H(\sigma|\mu) \leq \alpha$, then, for any $\theta>0$, we have
\begin{equation}\label{eq:distance_estimate}
\d_{\rm BL}(\sigma^m,\sigma) \leq \frac{1}{m}+2\frac{\alpha}{\theta}+ 2\frac{e^{-m^{2}-1+\theta}}{m\theta}.
\end{equation}
In particular,
\begin{equation}
\d_{\rm BL}(\sigma^m,\sigma) \leq \frac{3+2\alpha}{m}, \text{ for all } m \in \bb{N}.
\end{equation}
\end{lemma}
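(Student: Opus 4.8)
The plan is to couple $\sigma$ with its discretization and then control the resulting error through the entropy inequality. Let $X$ be distributed according to $\sigma$ and set $X^m = \pi^m(X)$, so that $(X, X^m)$ is a coupling of $\sigma$ and $\sigma^m$. By the coupling bound for $\d_{\rm BL}$ proved in the Preliminaries, it then suffices to estimate $\bb{E}\big(\d(X, X^m) \wedge 2\big)$. I would split this expectation according to whether $X$ lands in a good set or in $K_m^{\complement}$: on each good set $A_{m,i}$ with $i \leq \tilde{\ell}_m$ one has $\d(X, X^m) \leq \diam(A_{m,i}) < 1/m$ by Lemma~\ref{lemma:good_partition}, while on $K_m^{\complement}$ one simply bounds $\d(X, X^m) \wedge 2$ by $2$. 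This yields
\begin{equation}
\d_{\rm BL}(\sigma^m, \sigma) \leq \frac{1}{m} + 2\,\sigma(K_m^{\complement}).
\end{equation}

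The main point is then to transfer the smallness of $\mu(K_m^{\complement})$, guaranteed by~\eqref{eq:choice_K}, into a bound on $\sigma(K_m^{\complement})$, using the hypothesis $H(\sigma|\mu) \leq \alpha$. Here I would apply the entropy inequality~\eqref{eq:entropy_inequality} to the bounded measurable test function $f = \theta\, 1_{K_m^{\complement}}$. Since $\int e^f \d\mu = 1 + (e^\theta - 1)\mu(K_m^{\complement})$ and $\log(1+x) \leq x$, this gives
\begin{equation}
\theta\, \sigma(K_m^{\complement}) \leq \alpha + (e^\theta - 1)\,\mu(K_m^{\complement}) \leq \alpha + e^\theta\,\frac{e^{-m^2-1}}{m},
\end{equation}
where the last step uses~\eqref{eq:choice_K}. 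Dividing by $\theta$ and inserting the resulting estimate for $\sigma(K_m^{\complement})$ into the coupling bound produces exactly the first claimed inequality~\eqref{eq:distance_estimate}.

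For the final ``in particular'' statement, I would specialize the free parameter $\theta$. Choosing $\theta = m^2 + 1$ makes the exponential term collapse, since $e^{-m^2-1+\theta} = 1$; then $2\alpha/\theta \leq 2\alpha/m$ and $2 e^{-m^2-1+\theta}/(m\theta) \leq 2/m$, both because $m^2 + 1 \geq m \geq 1$. Summing the three terms gives $\d_{\rm BL}(\sigma^m,\sigma) \leq (3 + 2\alpha)/m$, as required.

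The only genuinely delicate step is the entropy-transfer inequality; everything else is a direct coupling estimate. The essential idea is that the variational/entropy inequality lets one export the exponentially small $\mu$-mass of $K_m^{\complement}$ onto $\sigma$ at the cost of the additive entropy budget $\alpha$, and the specific tail calibration~\eqref{eq:choice_K} is chosen precisely so that the exponential factor $e^\theta$ can be absorbed by taking $\theta$ of order $m^2$.
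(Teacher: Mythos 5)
Your proposal is correct and follows essentially the same route as the paper: the coupling bound $\d_{\rm BL}(\sigma^m,\sigma)\leq \frac{1}{m}+2\sigma(K_m^{\complement})$, followed by the entropy inequality applied to $f=\theta 1_{K_m^{\complement}}$ with $\log(1+x)\leq x$ and the tail bound~\eqref{eq:choice_K}. The only (immaterial) difference is the final calibration: you take $\theta=m^{2}+1$ to make the exponential term collapse, whereas the paper takes $\theta=m$ and absorbs $e^{-m^{2}+m-1}\leq 1$; both yield $(3+2\alpha)/m$.
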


\begin{proof}
Consider $X \sim \sigma$ and notice that $X^{m} = \pi^{m}(X)$ has distribution $\sigma^{m}$. Therefore,
\begin{equation}
\d_{\rm BL}(\sigma^m,\sigma)\leq \bb{E}(\d(X^m,X)\wedge 2).
\end{equation}
Splitting on whether $X\in K_{m}$ or not, we obtain
\begin{equation}\label{eq:distance_estimate_1}
\d_{\rm BL}(\sigma^m,\sigma)\leq \frac{1}{m}+2\sigma(K_m^{\complement}).
\end{equation}

We now combine the entropy inequality with the bound $\log (1+x)\leq x$ to obtain, for $\theta>0$,
\begin{equation}\label{eq:bound_bad_probability}
\begin{split}
\sigma (K_m^{\complement}) & = \frac{1}{\theta} \E_{\sigma}[ \theta 1_{K_m^{\complement}} ] \leq \frac{1}{\theta}\Big( H(\sigma|\mu)+\log \bb{E}_{\mu} [e^{\theta 1_{K_m^{\complement}}}] \Big) \\
& = \frac{1}{\theta} \Big( \alpha + \log \big(1-\mu(K_m^{\complement})+e^\theta \mu(K_m^{\complement})\big) \Big) \\
& \leq \frac{\alpha}{\theta} + \frac{(e^{\theta}-1)}{\theta} \mu(K_m^{\complement}) \\
& \leq \frac{\alpha}{\theta} + \frac{(e^{\theta}-1)}{\theta}\frac{e^{-m^{2}-1}}{m},
\end{split}
\end{equation}
by the choice of $K_m$ in~\eqref{eq:choice_K}. Combining the equation above with~\eqref{eq:distance_estimate_1} concludes the proof of~\eqref{eq:distance_estimate}.

Choose now $\theta=m$ in~\eqref{eq:distance_estimate} to obtain
\begin{equation}
\d_{\rm BL}(\sigma^m,\sigma)\leq \frac{1}{m}+2\frac{\alpha}{m} + 2\frac{e^{-m^{2}+m-1}}{m^{2}} \leq \frac{3+2\alpha}{m},
\end{equation}
concluding the proof.
\end{proof}

Second, we provide a martingale that will be useful during the proof.
\begin{lemma}\label{lemma:martingale}
Assume either that $H(\upsilon|\mu)$ or $\sup_{m}H(\upsilon^{m}|\mu^{m})$ is finite. Then
\begin{equation}\label{eq:martingale}
S_m = \frac{\d \upsilon^m}{\d \mu^m} \circ \pi^{m}
\end{equation}
is an uniformly-integrable martingale in the probability space $\big(M, \mc{B}(M), \mu \big)$ with respect to the filtration $\big(\mathcal{F}_{m}\big)_{m \in \bb{N}}$.
\end{lemma}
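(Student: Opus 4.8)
The plan is to recognize $(S_m)_m$ as the sequence of densities of $\upsilon$ with respect to $\mu$ along the filtration $(\mc{F}_m)_m$ --- which makes the martingale property immediate --- and then to promote this to uniform integrability by exploiting the superlinearity of $x \mapsto x\log x$ together with the entropy bound. First I would reduce both hypotheses to a single estimate. The variational formulation of the relative entropy gives, as in Remark~\ref{remark:rate_function}, the inequality $H(\upsilon^m|\mu^m) \leq H(\upsilon|\mu)$ for every $m$; I stress that only this easy bound is used, never the full conclusion of Lemma~\ref{le:supgoodpart}, since the present lemma feeds into its proof. Consequently, under either hypothesis,
\[
\alpha := \sup_{m} H(\upsilon^m|\mu^m) < \infty .
\]
Finiteness of each $H(\upsilon^m|\mu^m)$ forces $\upsilon^m \ll \mu^m$, so $\upsilon(A_{m,i}) = 0$ whenever $\mu(A_{m,i}) = 0$; hence $S_m$, equal to $\upsilon(A_{m,i})/\mu(A_{m,i})$ on the atom $A_{m,i}$, is well defined $\mu$-almost everywhere and $\mc{F}_m$-measurable. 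A direct computation on atoms gives $\int_A S_m \, \d\mu = \upsilon(A)$ for all $A \in \mc{F}_m$, so $S_m$ is the density of $\upsilon|_{\mc{F}_m}$ with respect to $\mu|_{\mc{F}_m}$; in particular $\E_\mu[S_m] = 1$ and $S_m$ is integrable.

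The martingale property then comes for free from nestedness. Since the partitions are nested we have $\mc{F}_m \subset \mc{F}_{m+1}$, and for any $A \in \mc{F}_m$ the identity of the previous step applied at levels $m$ and $m+1$ yields $\int_A S_{m+1} \, \d\mu = \upsilon(A) = \int_A S_m \, \d\mu$. As $S_m$ is $\mc{F}_m$-measurable, this is exactly $\E_\mu[S_{m+1} \mid \mc{F}_m] = S_m$, so $(S_m)_m$ is a nonnegative martingale in $(M, \mc{B}(M), \mu)$.

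The substantive step, which I expect to be the main obstacle, is uniform integrability. The crucial observation is that the discrete entropy is precisely the expectation of $x\log x$ at the density,
\[
\E_\mu[S_m \log S_m] = \sum_{i} \upsilon(A_{m,i}) \log \frac{\upsilon(A_{m,i})}{\mu(A_{m,i})} = H(\upsilon^m|\mu^m) \leq \alpha .
\]
Because $x\log x$ is superlinear and bounded below by $-1/e$, this uniform bound controls the tails: for $K > 1$, on $\{S_m > K\}$ one has $\log S_m > \log K > 0$, whence $S_m \leq (S_m \log S_m)/\log K$ and
\[
\E_\mu\big[ S_m \, 1_{\{S_m > K\}} \big] \leq \frac{1}{\log K} \, \E_\mu\big[ (S_m \log S_m)^+ \big] \leq \frac{\alpha + 1/e}{\log K},
\]
uniformly in $m$, which tends to $0$ as $K \to \infty$. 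Equivalently, one may quote the de la Vall\'ee-Poussin criterion with $\Phi(x) = x\log x - x + 1 \geq 0$, convex and superlinear, for which $\E_\mu[\Phi(S_m)] = H(\upsilon^m|\mu^m) \leq \alpha$ since $\E_\mu[S_m] = 1$. The only points requiring care are the atoms of vanishing $\mu$-measure and the sign change of $x\log x$, both dispatched by the elementary bound $x\log x \geq -1/e$; with uniform integrability established alongside the martingale property, the proof is complete.
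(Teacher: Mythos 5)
Your proof is correct, and for the hypothesis $\sup_m H(\upsilon^m|\mu^m)<\infty$ it coincides with the paper's argument: the same atom-by-atom verification that $\int_A S_{m+1}\,\d\mu=\upsilon(A)=\int_A S_m\,\d\mu$ for $A\in\mathcal{F}_m$, and the same use of the bound $\E_\mu[S_m\log S_m]=H(\upsilon^m|\mu^m)$ to control the tails. The genuine difference is in how the other hypothesis is handled. The paper keeps the two cases separate: when $H(\upsilon|\mu)<\infty$ it invokes the Doob martingale $\hat{S}_m=\E_\mu\big[\tfrac{\d\upsilon}{\d\mu}\,\big|\,\mathcal{F}_m\big]$, which is uniformly integrable by general theory, and then identifies $\hat{S}_m$ with $S_m$. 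You instead collapse the case split by observing that the variational inequality $H(\upsilon^m|\mu^m)\leq H(\upsilon|\mu)$ (the easy direction, as in Remark~\ref{remark:rate_function}, so no circularity with Lemma~\ref{le:supgoodpart}) reduces the first hypothesis to the second; this buys a single uniform argument and dispenses with the conditional-expectation identification, at the cost of always running the entropy computation even when the shorter Doob-martingale route is available. A small bonus of your write-up: you correctly insert the $+1/e$ correction coming from $x\log x\geq -1/e$ when passing from $\E_\mu[S_m\log S_m]\leq\alpha$ to the tail bound $\E_\mu[S_m 1_{\{S_m>K\}}]\leq(\alpha+1/e)/\log K$; the paper's displayed bound $K/\log M$ silently omits this harmless constant.
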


\begin{proof}
Assume first that $H(\upsilon|\mu) < \infty$. In this case, $\tfrac{\d \upsilon}{\d \mu}$ exists and
\begin{equation}
\hat{S}_m = \E_\mu \Big[ \frac{\d \upsilon}{\d \mu} \Big| \F_m \Big]
\end{equation}
is a uniformly-integrable martingale. It follows directly from the definition of conditional expectation and Radon-Nikodyn derivative  that $\hat{S}_{m}=S_{m}$ almost surely for every $m \in \bb{N}$, concluding the proof of the first case.

Assume now that $\sup_{m}H(\upsilon^{m}|\mu^{m}) < \infty$ and observe that this implies that $S_m$ is well defined for all $m \in \bb{N}$, has expectation one, and is non-negative. We first have to verify that $\E[S_{m+1} |\F_m] = S_m$. Take an element $A_{m,k} \in \F_m$, with $0 \leq k \leq \ell_{m}$, so that
\begin{equation}
    \E_\mu [S_{m} \cdot 1_{A_{m,k}}] = \frac{\upsilon(A_{m,k})}{\mu(A_{m,k})} \E_{\mu}[ 1_{A_{m,k}}] = \upsilon(A_{m,k}).
\end{equation}
Now, let $B_1, \cdots, B_j$ elements of $\F_{m+1}$ such that $\cup_{i=1}^j B_i = A_{m,k}$, then
\begin{equation}
\begin{split}
    \E_\mu\left[\E_\mu[S_{m+1} | \F_m \right] \cdot 1_{A_{m,k}}] & = \E_\mu\left[ \E_\mu\left[S_{m+1} \left. \sum_{i=1}^j 1_{B_i} \right| \F_m \right] \right] \\
    & = \E_\mu\left[ \E_\mu\left[\left. \sum_{i=1}^j  \frac{\upsilon(B_i)}{\mu(B_i)} 1_{B_i} \right| \F_m \right] \right] \\
    &= \sum_{i=1}^j  \frac{\upsilon(B_i)}{\mu(B_i)} \mu(B_i) \\
    &= \upsilon(A_{m,k})=\E[S_m1_{A_{m,k}}],
\end{split}
\end{equation}
which implies $ \E[S_{m+1} |\F_m] = S_m$  , concluding our first statement.

In order to verify uniform integrability of $S_{n}$, observe that
\begin{equation}
\E_\mu[S_m \log S_m] = H(\upsilon^{m}|\mu^{m})  \leq \sup_{m}  H(\upsilon^{m}|\mu^{m}):=K < \infty.
\end{equation}

Now, for each $M>0$ we have, uniformly in $m \in \bb{N}$,
\begin{equation}
    E_\mu[S_m 1_{\{S_m \geq M\}}] \leq \E_\mu \left[ S_m 1_{\{S_m \geq M\}}  \dfrac{\log S_m}{\log M} \right] \leq \frac{K}{\log M}.
\end{equation}
Therefore, $S_m$ is a uniformly-integrable martingale, concluding the proof of the lemma.
\end{proof}

We are now in position to prove the first part of Lemma~\ref{le:supgoodpart}. 
\begin{proof}[Proof of Lemma~\ref{le:supgoodpart}]
We first observe that, via the variational definition of entropy and the fact that $M_{m} \subset M_{m+1}$, for all $m \in \bb{N}$, we obtain that $H(\upsilon^m|\mu^m)$ is monotone increasing in $m$ (following the steps pointed out in Remark~\ref{remark:rate_function} or directly as a consequence of~\cite[Corollary 5.2.2]{gray}). In particular,
\begin{equation}
\sup_m H(\upsilon^m|\mu^m)=\lim_{m \to \infty} H(\upsilon^m|\mu^m)
\end{equation}
and thus it suffices to verify that
\begin{equation}\label{eq:limit}
\lim_m H(\upsilon^m|\mu^m) = H(\upsilon|\mu).
\end{equation}

Again from the variational definition of relative entropy we have $ H(\upsilon^m|\mu^m) \leq H(\upsilon|\mu)$, for all $m \in \bb{N}$, so that
\begin{equation}
\limsup_{m \to \infty} H(\upsilon^{m}|\mu^{m}) \leq   H(\upsilon|\mu).
\end{equation}

We now work on the proof of the reverse inequality. The strategy of the proof is as follows. If at least one of the two quantities of interest is finite, we have access to the uniformly-integrable martingale $S_{m}$ given by the Radon-Hikodyin derivative of $\upsilon^{m}$ with respect to $\mu^{m}$. As we will see, this martingale converges in $L^{1}$ and almost surely to $\tfrac{\d \upsilon}{\d \mu}$, which will yield the result when combined with Fatou's Lemma.

Assume that either $H(\upsilon|\mu)<\infty$ or $\sup_{m}H(\upsilon^{m}|\mu^{m})<\infty$. The martingale $S_{m}$ introduced in~\eqref{eq:martingale} is uniformly integrable and thus converges almost surely and in $L^{1}$ to a random variable $X$.

In the case $H(\upsilon|\mu)<\infty$, we have
\begin{equation}
X=E_{\mu}\Big[ \frac{\d \upsilon}{\d \mu} \Big| \mathcal{F}_{\infty} \Big] = \frac{\d \upsilon}{\d \mu},
\end{equation}
since $\mathcal{F}_{\infty} = \mathcal{B}(M)$ (see Lemma~\ref{lemma:good_partition}).
If we are in the case $\sup_{m}H(\upsilon^{m}|\mu^{m})<\infty$, the above also holds precisely because elements in $\mathcal{B}(M)$ can be approxiamted by elements in $\cup_{m=1}^{\infty} \mathcal{F}_{m}$.

We now note that, since $x \log x \geq -e^{-1}$, Fatou's Lemma implies
\begin{equation}
\liminf_{m} \E_\mu \big[ S_{m} \log S_{m} \big] \geq \E_\mu \Big[ \frac{\d \upsilon}{\d \mu} \log \frac{\d \upsilon}{\d \mu} \Big],
\end{equation}
which verifies~\eqref{eq:limit_entropy} (see also Lemma~\ref{lemma:entropy}) and concludes the proof of the lemma.

\end{proof}

\section{Proof of Lemma~\ref{lemma:supinf}}\label{section:proof_supinf}
~
\par In this section we prove Lemma~\ref{lemma:supinf}. We fix $m_{0} \in \bb{N}$ and denote by
\begin{equation}\label{eq:sanov:defIstar}
I^0(\upsilon) := \sup_{m \geq m_{0}} \inf_{\sigma \in \bar{B}_{\frac{1}{\sqrt{m}}}(\upsilon)} H(\sigma|\mu^m). 
\end{equation}
Our goal is to show that $I^0(\upsilon)= H(\upsilon|\mu)$. We will prove this in two steps, by checking that $I^0(\upsilon) \leq H(\upsilon|\mu)$ and $I^0(\upsilon) \geq H(\upsilon|\mu)$. The first inequality is verified in the next paragraph. The reverse inequality is more delicate and we dedicate the rest of the section to verify it.

Let us check that $I^0(\upsilon) \leq H(\upsilon|\mu)$. Indeed, the inequality is trivial if $H(\upsilon|\mu) = \infty$. If on the other hand this entropy is finite, we have, in view of Lemma~\ref{le:supgoodpart},
\begin{equation}
\d_{\rm BL}(\upsilon, \upsilon^{m}) \leq \frac{c}{m } \leq \frac{1}{\sqrt{m}},
\end{equation}
for $m$ large enough, from which our claim follows by noting that $\upsilon^{m} \in \bar{B}_{\frac{1}{\sqrt{m}}}(\upsilon)$ and applying Lemma~\ref{le:supgoodpart}.

We now focus on the proof of the inequality
\begin{equation}\label{eq:goal}
I^0(\upsilon)\geq H(\upsilon|\mu).
\end{equation}
Once again we assume that $I^0(\upsilon)<\infty$, since the alternative case is trivial.

The first observation we make is that~\eqref{eq:goal} follows if, for any $\alpha > 0$,
\begin{equation} \label{implicationInequality}
I^0(\upsilon) < \alpha \text{ implies } H(\upsilon|\mu) \leq \alpha.
\end{equation}

This follows directly from the following lemma together with the lower semicontinuity of the relative entropy $H( \,\cdot\, |\mu)$.

\begin{lemma}
If $I^0(\upsilon) < \alpha$, then, for every $\varepsilon>0$, there exists $\rho \in B_{\varepsilon}(\upsilon)$ such that
\begin{equation}
H(\rho|\mu) \leq \alpha.
\end{equation}
\end{lemma}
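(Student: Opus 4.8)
The plan is to use the hypothesis $I^0(\upsilon)<\alpha$ to extract, for each large $m$, a discrete near-minimizer of $H(\,\cdot\,|\mu^m)$ close to $\upsilon$, and then to \emph{lift} it to a genuine measure on $M$ whose relative entropy against $\mu$ is controlled and which remains close to $\upsilon$. First I would unfold the definition in~\eqref{eq:sanov:defIstar}: since $I^0(\upsilon)<\alpha$, for every $m\geq m_0$ the infimum $\inf_{\sigma\in\bar B_{1/\sqrt m}(\upsilon)}H(\sigma|\mu^m)$ is strictly below $\alpha$, so we may pick $\sigma_m\in\bar B_{1/\sqrt m}(\upsilon)$ with $H(\sigma_m|\mu^m)<\alpha$. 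As this entropy is finite and $\mu^m$ is supported on the finite tag set $M_m$, the measure $\sigma_m$ is itself supported on $M_m$, i.e.\ $\sigma_m=\sum_i\sigma_m(a_{m,i})\delta_{a_{m,i}}$, with mass only on cells of positive $\mu$-measure.

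The central construction is the lift $\rho_m\in\mc{P}(M)$ obtained by redistributing each mass $\sigma_m(a_{m,i})$ over the cell $A_{m,i}$ proportionally to $\mu$, namely
\[
\rho_m(\,\cdot\,)=\sum_{i:\,\mu(A_{m,i})>0}\sigma_m(a_{m,i})\,\frac{\mu(\,\cdot\,\cap A_{m,i})}{\mu(A_{m,i})}.
\]
By construction $\rho_m\ll\mu$ with density constant on each cell equal to $\sigma_m(a_{m,i})/\mu(A_{m,i})$, and the integral formulation of Lemma~\ref{lemma:entropy} gives the exact identity $H(\rho_m|\mu)=H(\sigma_m|\mu^m)<\alpha$. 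Thus the entropy bound is free; all that remains is to show $\rho_m$ is close to $\upsilon$, and since $\sigma_m\in\bar B_{1/\sqrt m}(\upsilon)$ it suffices to bound $\d_{\rm BL}(\rho_m,\sigma_m)$.

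To estimate $\d_{\rm BL}(\rho_m,\sigma_m)$ I would test against an arbitrary $f\in BL(M)$ and decompose cell by cell, using $a_{m,i}\in A_{m,i}$ to write $\int f\,\d\rho_m-\int f\,\d\sigma_m=\sum_i\sigma_m(a_{m,i})\int\big(f(x)-f(a_{m,i})\big)\,\tfrac{\mu(\d x\cap A_{m,i})}{\mu(A_{m,i})}$. On the good cells ($i\leq\tilde{\ell}_m$, diameter $<1/m$) the Lipschitz property gives $|f(x)-f(a_{m,i})|<1/m$, contributing at most $1/m$; on the bad cells ($A_{m,i}\subset K_m^{\complement}$) the bound $|f|\leq1$ gives the factor $2$, contributing at most $2\sigma_m(K_m^{\complement})$. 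The key remaining estimate, and the main obstacle, is controlling the mass $\sigma_m(K_m^{\complement})$ placed on the badly behaved region.

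I would handle this exactly as in Lemma~\ref{lemma:convergenceNukDiv}: apply the entropy inequality~\eqref{eq:entropy_inequality} to the partition-measurable function $\theta 1_{K_m^{\complement}}$, use $\log(1+x)\leq x$ together with $\mu^m(K_m^{\complement})=\mu(K_m^{\complement})\leq e^{-m^2-1}/m$, and optimize with $\theta=m$ to obtain $\sigma_m(K_m^{\complement})\leq(\alpha+1)/m$ for $m$ large. Combining the two contributions yields $\d_{\rm BL}(\rho_m,\sigma_m)\leq\tfrac1m+\tfrac{2(\alpha+1)}{m}=\tfrac{3+2\alpha}{m}$, whence by the triangle inequality $\d_{\rm BL}(\rho_m,\upsilon)\leq\tfrac{3+2\alpha}{m}+\tfrac{1}{\sqrt m}\to0$. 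Finally, given $\varepsilon>0$ I would choose $m\geq m_0$ large enough that this quantity is below $\varepsilon$ and set $\rho=\rho_m$, which lies in $B_\varepsilon(\upsilon)$ and satisfies $H(\rho|\mu)<\alpha$, completing the argument.
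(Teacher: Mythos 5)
Your proposal is correct and follows essentially the same route as the paper: you lift the discrete near-minimizer $\sigma\in\bar B_{1/\sqrt m}(\upsilon)$ to the measure $\rho$ of~\eqref{eq:sanov:defnu}, identify $H(\rho|\mu)=H(\sigma|\mu^m)$, and control $\d_{\rm BL}(\rho,\upsilon)$ by the same $\tfrac{3+2\alpha}{m}+\tfrac{1}{\sqrt m}$ estimate via the entropy inequality on $K_m^{\complement}$. The only (harmless) difference is that you compute $H(\rho|\mu)$ directly from the piecewise-constant density via Lemma~\ref{lemma:entropy}, where the paper computes $H(\rho^{m+j}|\mu^{m+j})$ for all $j$ and invokes Lemma~\ref{le:supgoodpart}.
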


\begin{proof}
Our goal will be find $\rho$ such that $H(\rho|\mu) \leq \alpha$ and $\d_{\rm BL}(\upsilon, \rho)< \varepsilon$. Fix $m \geq m_{0}$ large enough such that
\begin{equation}
\frac{3+2\alpha}{m}+\frac{1}{\sqrt{m}} < \varepsilon,
\end{equation}

Recall from~\eqref{eq:sanov:defIstar} that $I^0(\upsilon) < \alpha$ implies that there exists $\sigma \in \bar{B}_{\frac{1}{\sqrt{m}}}(\upsilon)$ such that $H(\sigma|\mu^{m}) \leq \alpha$. Notice that, since this entropy is finite, we have $\sigma = \sigma^{m}$.

Define
\begin{equation}\label{eq:sanov:defnu}
\rho(F):=\sum_{i=1}^{\ell_m}\frac{\sigma(A_{m, i})}{\mu (A_{m, i})} \mu(F \cap A_{m, i}) = \sum_{i=0}^{\ell_m} \mu(F | A_{m, i}) \sigma(A_{m, i}).
\end{equation}
Via direct substitution it follows that $\rho^{m} = \sigma$. We claim that $H(\rho|\mu)=H(\sigma|\mu^{m}) \leq \alpha$ and $\d_{\rm BL}(\upsilon, \rho)< \varepsilon$.

In order to verify that $H(\rho|\mu)=H(\sigma|\mu^{m})$, observe that 
\begin{equation}
  \begin{split}
  H(\rho^{m+j}|\mu^{m+j}) & =\sum_{i=0}^{\ell_{m+j}} \rho(A_{m+j, i}) \log \frac{\rho(A_{m+j, i})}{\mu(A_{m+j, i})}\\
   &=\sum_{i=0}^{\ell_m}\sum_{k:A_{m+j, k}\subset A_{m, i}} \rho(A_{m+j, k})\log\frac{ \rho(A_{m+j, k})}{\mu(A_{m+j, k})}.
  \end{split}
\end{equation}
Furthermore, if  $A_{m+j, k} \subset A_{m, i}$, then, from~\eqref{eq:sanov:defnu},
\begin{equation}
\rho(A_{k+j, m})=\frac{\sigma(A_{m, i})}{\mu(A_{m, i})}\mu(A_{m+j, k}).
\end{equation}
Therefore,
\begin{equation}
\begin{split}
    H(\rho^{m+j}|\mu^{m+j}) & = \sum_{i=0}^{\ell_m} \sum_{k:A_{m+j, k}\subset A_{m, i}} \frac{\sigma(A_{m, i})}{\mu(A_{m, i})}\mu(A_{m+j, k}) \log\frac{\sigma(A_{m, i})}{\mu(A_{m, i})} \\
    & = H(\sigma|\mu^{m}),
\end{split}
\end{equation}
since
\begin{equation}
\sum_{k:A_{m+j, k} \subset A_{m, i}} \mu(A_{m+j, k})= \mu(A_{m, i}).
\end{equation}
In particular, from Lemma~\ref{le:supgoodpart}, $H(\rho|\mu) = H(\sigma|\mu^{m}) \leq \alpha$.

Finally, we now prove that $\d_{\rm BL}(\rho, \upsilon) < \varepsilon$ by estimating
\begin{equation}
\begin{split}
\d_{\rm BL}(\rho, \upsilon) & \leq \d_{\rm BL}(\rho, \rho^{m}) + \d_{\rm BL}(\rho^{m}, \sigma) + \d_{\rm BL}(\sigma, \upsilon) \\
& \leq \frac{3+2\alpha}{m}+\frac{1}{\sqrt{m}} \leq \varepsilon,
\end{split}
\end{equation}
where the last line uses Lemma~\ref{lemma:convergenceNukDiv}, since $H(\rho|\mu)$ is bounded by $\alpha$ and recalling that $\rho^m=\sigma$. This concludes the proof.
\end{proof}

\bibliographystyle{plain} 

\bibliography{mybib}

\end{document}